\documentclass[reqno]{amsart}

\usepackage{amsmath, mathtools}
\usepackage{amssymb}
\usepackage{bbm}
\usepackage[margin=1in]{geometry}
\usepackage{xcolor}
\usepackage{comment}
\usepackage{graphicx}
\usepackage{empheq}

\theoremstyle{plain}
\newtheorem{thm}{Theorem}[section]
\newtheorem{lem}{Lemma}[section]

\theoremstyle{definition}

\theoremstyle{remark}

\newtheorem{rmk}{Remark}[section]

\newcommand{\norm}[1]{\left\lVert#1\right\rVert}

\newcommand{\osc}{\operatorname{osc}}

\newcommand{\dv}{\operatorname{div}}

\newcommand{\M}{\mathbb{M}}
\newcommand{\om}{\omega}
\newcommand{\Om}{\Omega}
\newcommand{\sig}{\sigma}
\newcommand{\eps}{\epsilon}
\newcommand{\Ups}{\Upsilon}
\newcommand{\R}{\mathbb{R}}
\newcommand{\1}{\mathbbm{1}}
\newcommand{\A}{\mathcal{A}}
\newcommand{\N}{\mathcal{N}}
\newcommand{\BMO}{\operatorname{BMO}}
\newcommand{\loc}{\operatorname{loc}}
\newcommand{\Id}{\operatorname{Id}}
\newcommand{\Log}[1]{(#1)^{\log}}
\newcommand{\ov}[1]{\overline{#1}}
\newcommand{\til}[1]{\tilde{#1}}
\newcommand{\ovM}{\overline{\M}}
\newcommand{\ovom}{\overline{\omega}}
\newcommand{\data}{\textbf{data}}
\newcommand{\inner}[2]{\left\langle #1, #2 \right\rangle}

\def\Xint#1{\mathchoice
   {\XXint\displaystyle\textstyle{#1}}%
   {\XXint\textstyle\scriptstyle{#1}}%
   {\XXint\scriptstyle\scriptscriptstyle{#1}}%
   {\XXint\scriptscriptstyle\scriptscriptstyle{#1}}%
   \!\int}
\def\XXint#1#2#3{{\setbox0=\hbox{$#1{#2#3}{\int}$}
     \vcenter{\hbox{$#2#3$}}\kern-.5\wd0}}
\def\dashint{\Xint-}

\usepackage{cite}
\usepackage{hyperref}
\hypersetup{ colorlinks = true, urlcolor = blue, linkcolor = blue, citecolor = red }
\usepackage{enumitem}
\usepackage{nameref}
\makeatletter
\let\orgdescriptionlabel\descriptionlabel
\renewcommand*{\descriptionlabel}[1]{%
  \let\orglabel\label
  \let\label\@gobble
  \phantomsection
  \edef\@currentlabel{#1\unskip}%
  \let\label\orglabel
  \orgdescriptionlabel{#1}%
}
\numberwithin{equation}{section}

\begin{document}
	
\title[Generalized Double Phase Equations with Matrix Weights]{ Calder\'on--Zygmund Estimates for Generalized Double Phase Equations with Matrix Weights}

\author[Byun]{Sun-Sig Byun}
\address{Department of Mathematical Sciences and Research Institute of Mathematics,
	Seoul National University, Seoul 08826, Republic of Korea}
\email{byun@snu.ac.kr}

\author[Kim]{Hongsoo Kim}
\address{Department of Mathematical Sciences, Seoul National University, Seoul 08826, Republic of Korea}
\email{rlaghdtn98@snu.ac.kr}

\thanks {S.-S. Byun was supported by Mid-Career Bridging
Program through Seoul National University. H. Kim was supported by the National Research Foundation of Korea(NRF) grant funded by the Korea government [Grant No. 2022R1A2C1009312].}

\makeatletter
\@namedef{subjclassname@2020}{\textup{2020} Mathematics Subject Classification}
\makeatother
\subjclass[2020]{35B65, 35J70, 	35J75, 35D30}
\keywords{Orlicz function, Double phase, Matrix weight, Calder\'on-Zygmund estimates}

\everymath{\displaystyle}
\bibliographystyle{amsplain}

\begin{abstract}
	We prove Calder\'on--Zygmund estimates for generalized double phase equations with Orlicz growth and variable matrix weights.
    The operator combines a non-uniformly elliptic double phase structure with a degenerate or singular matrix weight satisfying a small log-$\mathrm{BMO}$ condition.
    Under appropriate structural assumptions, we show that higher integrability of the weighted datum yields higher integrability of the weighted gradient of weak solutions.
    Our results extend the existing Calderón–Zygmund theory for double phase problems and weighted elliptic equations to a unified framework capturing the interaction between Orlicz growth and matrix-weighted structures, thereby building upon and unifying the results in \cite{Baasandorj20} and \cite{Cho25}.
\end{abstract}

\maketitle

\section{Introduction} \label{sec1}
In this paper, we study the following double-phase problem with Orlicz growth and a matrix-valued weight, whose prototype is given by
\begin{align*}
    &\dv\left(\M G'(|\M Du|)\frac{\M Du}{|\M Du|}+a(x)\M H'(|\M Du|)\frac{\M Du}{|\M Du|}\right) \\ &\quad = \dv\left(\M G'(|\M F|)\frac{\M F}{|\M F|}+a(x)\M H'(|\M F|)\frac{\M F}{|\M F|}\right) \text{ in }\Omega.
\end{align*}
where $\Om \subset \R^n$ is a bounded domain.
Here $G,H\in \N$ (see Section \ref{sec3.2}) and coefficient $a : \Omega \rightarrow \mathbb{R}$ satisfies
\begin{align} \label{GHcond}
     \sup_{t\geq0} \frac{H(t)}{G(t)+G^{\min\{1+\frac{\alpha}{n},1+i_0\}}(t)}<\infty, \quad 0\leq a(x) \in C^{0,\alpha}(\Omega)
\end{align}
for some $i_0 < i(H)$.
The matrix-valued weight $\M:\Om \rightarrow\R^{n\times n}$ is assumed to be symmetric and positive definite almost everywhere and satisfies
\begin{align} \label{Mcond}
    |\M(x)||\M^{-1}(x)| \leq \Lambda.
\end{align}
for almost every $x \in \Om$, where $|\cdot|$ denotes the spectral norm. 
Then \eqref{Mcond} implies that for any $\xi \in \R^n$ and almost every $x \in \Om$, the following comparability holds.
\begin{align} \label{Mwcomp}
    \Lambda^{-1} |\M(x)| | \xi| \leq |\M(x)\xi| \leq |\M(x)||\xi|.
\end{align}

Our objective is the Calderon-Zygmund estimates for this generalized double-phase problem with a matrix variable weight.
Specifically, we prove that under sufficient conditions on the weight $\M$, the following implication holds for any Young function $\Ups \in \N$,
\begin{align} \label{goal}
    G(|\M F|)+a(x)H(|\M F|) \in L^\Upsilon_{\loc} \Longrightarrow G(|\M Du|)+a(x)H(|\M Du|) \in L^\Upsilon_{\loc}
\end{align}

When $\M$ is the identity matrix $\Id$,  and $G(t)=t^p$, $H(t)=t^q$, the equation reduces to the classical double phase problem introduced by Zhikov \cite{Zhikov86,Zhikov95}. 
The regularity theory for such non-uniformly elliptic problems was developed in 
\cite{Colombo15,Colombo152,Baroni18,Colombo16,Filippis19}
under the optimal gap condition:
\begin{align} \label{pqcond}
    \frac{q}{p} \leq 1+\frac{\alpha}{n}.
\end{align}
This framework was later extended to general Orlicz growth in 
\cite{Byun20,Baasandorj20,Baasandorj25},
where the corresponding structural condition becomes
\begin{align}\label{wantGHcond}
    \sup_{t\geq0} \frac{H(t)}{G(t)+G^{1+\frac{\alpha}{n}}(t)}<\infty,
\end{align}
which reduces to \eqref{pqcond} in the case of power-type growth.
Note that if $i(H)>\frac{\alpha}{n}$, then the condition above coincides with \eqref{GHcond}.
In particular, \eqref{goal} was proved in \cite{Baasandorj20} for the case $\M = \Id$.

The purpose of the present paper is to extend this Orlicz double phase Calder\'on--Zygmund theory from constant matrix weight $\M=\Id$ to variable matrix weights under suitable smallness assumptions on the oscillation of $\M$.
In this setting, the presence of the matrix weight $\M$ interacts nontrivially with the generalized growth structure \eqref{GHcond}.
As a result, the problem requires a combined treatment of the double phase framework with Orlicz growth and matrix-weight Calderón–Zygmund theory, and does not follow directly from either framework alone.

While the preceding literature focuses on the challenges of double-phase growth, a separate but equally vital line of research addresses regularity in the presence of matrix-valued weights $\M$. 
Calder\'on--Zygmund estimates for elliptic equations with matrix weights were developed in \cite{Balci22}, where for the $p$-growth problem
\begin{align} \label{pprob}
    \dv( \M A(x,\M Du)) = \dv( \M A(x,\M F))
\end{align}
with $A(x,z)=|z|^{p-2}z$ and $\M$ satisfying \eqref{Mcond}, it was shown that for any $\gamma>1$,
\begin{align*}
    |\M F|^p \in L^\gamma_{\loc} \Longrightarrow |\M Du|^p \in L^\gamma_{\loc}, 
\end{align*}
under a small log-$\BMO$ condition on $\M$.
Further developments in this direction appear in \cite{Balci23,Yang25,Cho242,Cho24}.
These studies identify the small log-$\BMO$ assumption as the correct quantitative regime for matrix-weight Calder\'on--Zygmund theory.

The first integration of matrix weights and double phase structure was achieved in \cite{Cho25}, where the power-type operator was treated under \eqref{pqcond} together with a log-$\BMO$ condition on $\M$. 
The present work can be viewed as a natural extension of \cite{Cho25}, advancing the theory from the power-type double phase with matrix weights to the fully generalized Orlicz double phase setting. 

A crucial ingredient enabling this extension is the absence of the Lavrentiev phenomenon for the weighted generalized double phase functional,
\begin{align*}
    \mathcal{P}(v,\Omega) := \int_{\Omega}G(|\M Dv|)+a(x)H(|\M Dv|) \ dx,
\end{align*}
proved in \cite{Byun25} under the assumption \eqref{GHcond}.
This density result allows approximation by smooth functions in the weighted Musielak--Orlicz setting and is essential for the comparison arguments underlying \eqref{goal}. 
We emphasize that the stronger condition \eqref{GHcond}, compared with \eqref{wantGHcond}, enters only through this density property; if the absence of the Lavrentiev phenomenon were available under \eqref{wantGHcond}, then the Calder\'on--Zygmund estimate \eqref{goal} would remain valid under that optimal condition as well.

The paper is organized as follows.
In Section \ref{sec2}, we state the assumptions and the main result, Theorem \ref{Main} and introduce notation and preliminaries about orlicz functions, weights and weighted function space.
In Section \ref{sec3}, we present the absence of Lavrentiev phenomenon and establishes weighted Sobolev-Poincare inequalities.
In Section \ref{sec4}, we obtain regularity properties of suitable reference problems.
Finally, Section \ref{sec5} contains the proof of Theorem \ref{Main}.

\section{Preliminaries and Main Results} \label{sec2}
In this section, we introduce the notation and preliminary results used throughout the paper, and state the main result under the corresponding structural assumptions.

\subsection{Notation}
We collect here the notation that will be used throughout the paper.
We write $a \approx b$ if $c^{-1}a \le b \le ca$ for some constant $c>0$.
We write $B_r(x_0) := \{x \in \R^n : |x-x_0| < r\}$ as a open ball with centered at $x_0$ with radius $r$.
For any $p>1$, we write $p' = \frac{p}{p-1}$.
For a locally integrable function $f$ on $\R^n$, we denote the average of $f$ over a ball $B$ by
\begin{align*}
     (f)_B= \dashint_B f dx = \frac{1}{|B|} \int_B f dx.
\end{align*}
\subsection{Young function} \label{sec3.2}
In this subsection, we recall several basic facts about Young function that will be used throughout the paper.
We write $\Phi \in \N$ if $\Phi \in C^1([0,\infty))\cap C^2((0,\infty))$ is convex and increasing, satisfies $\Phi(0)=0$, $\lim_{t\rightarrow \infty} \Phi(t)= \infty$, and there exist constants $0<i(\Phi)\leq s(\Phi)$ such that,
for any $t>0$,
\begin{align} \label{Ncond}
    i(\Phi) \leq \frac{t\Phi''(t)}{\Phi'(t)}  \leq s(\Phi).
\end{align}
For a Young function $\Phi \in \N$, it follows that
\begin{align*}
    \Phi(t) \approx t\Phi'(t) \approx t^2\Phi''(t),
\end{align*}
uniformly for $t>0$ with implicit constants depending only on $i(\Phi)$ and $s(\Phi)$.
Moreover, for any $\lambda>0$,
\begin{align} \label{lamt}
    \min \{ \lambda ^{1+i(\Phi)},\lambda ^{1+s(\Phi)}\} \Phi(t) \leq \Phi(\lambda t) \leq \max \{ \lambda ^{1+i(\Phi)},\lambda ^{1+s(\Phi)}\} \Phi(t).
\end{align}
We will also use the following form of Young's inequality:
\begin{align} \label{young}
    t \Phi'(s) + s\Phi'(t) \leq \epsilon \Phi(t) + \frac{c}{\epsilon^{s(\Phi)}}\Phi(s)
\end{align}
for any $s,t \geq 0$ and $\epsilon\in(0,1)$.

We also define the auxiliary vector field
\begin{align*}
    V_\Phi(x) :=\left[ \frac{\Phi'(|z|)}{|z|}\right]^{1/2}z.
\end{align*}
Then for a vector field $A$ satisfying \eqref{AGHcond}, we have the following inequality:
\begin{align} \label{VGH}
     |V_G(z_1) - V_G(z_2)|^2 +a(x)|V_H(z_1) - V_H(z_2)|^2\leq \left\langle A(x,z_1)-A(x,z_2),z_1-z_2 \right\rangle.
\end{align}
\subsection{Weights and the weighted Sobolev--Orlicz space}

We briefly recall some facts on Muckenhoupt weights. 
Let $\mu : \mathbb{R}^n \to [0,\infty)$ be a locally integrable function and let $1<p<\infty$. 
We say that $\mu$ is an $\A_p$-weight if $\mu \in L^1_{\loc}(\mathbb{R}^n)$ and
\begin{align*}
    [\mu]_{\A_p} := \sup_B \left( \dashint_B \mu \, dx \right) 
    \left( \dashint_B \mu^{-\frac{1}{p-1}} \, dx \right)^{p-1} < \infty,
\end{align*}
where the supremum is taken over all balls $B \subset \mathbb{R}^n$.

Let $\M : \Omega \to \mathbb{R}^{n\times n}$ be a symmetric positive definite matrix weight, and set $\om(x) := |\M(x)|$, where $|\cdot|$ denotes the spectral norm. 
We define the logarithmic averages of $\M$ and $\om$ over a ball $B$ by
\begin{align*}
    (\M)^{\log}_B := \exp\left( \dashint_B \log \M(x)\, dx \right), 
    \qquad 
    (\om)^{\log}_B := \exp\left( \dashint_B \log \om(x)\, dx \right),
\end{align*}
where $\exp$ and $\log$ denote the matrix exponential and matrix logarithm, respectively.

In view of the structural assumptions of $\M$ mentioned before, the matrix $(\M)^{\log}_B$ is symmetric positive definite and satisfies the comparability
\begin{align} \label{Mlog}
    \Lambda^{-1} |(\M)^{\log}_B|\, |\xi| 
    \le |(\M)^{\log}_B \xi| 
    \le |(\M)^{\log}_B|\, |\xi|
    \qquad \text{for all } \xi \in \mathbb{R}^n.
\end{align}

We recall that for a domain $\Omega \subset \mathbb{R}^n$, the space $\BMO(\Omega)$ consists of all functions $f \in L^1_{\loc}(\Omega)$ such that
\begin{align*}
    |f|_{\BMO(\Omega)} := \sup_B \dashint_B |f(x) - (f)_B|\, dx < \infty.
\end{align*}

Finally, we note that the scalar weight $\om$ inherits the logarithmic BMO control of $\M$ in the sense that
\begin{align*}
    |\log \om|_{\BMO(\Omega)} \le 2\, |\log \M|_{\BMO(\Omega)}.
\end{align*}

We stress that the small log-BMO condition on $\M$ is crucial in what follows. In particular, it provides quantitative control of the oscillation of the weight and allows comparison with its logarithmic averages, as made precise in the next lemma.
\begin{lem}[{\cite[Proposition 5]{Balci22}}] \label{M-ovM}
    There exist $\mu(n,\Lambda), c(n,\Lambda)>0$ such that if $|\log \M|_{\BMO(B)} \leq \frac{\mu}{s}$ for some $s \geq1$, then
    \begin{align*}
        \left( \dashint_B \left| \frac{\M -(\M)^{\log}_B}{(\M)^{\log}_B}\right|^s \right)^{1/s} \leq cs|\log \M|_{\BMO(B)}.
    \end{align*}
    The same holds with $\M$ replaced by $\om$.
\end{lem}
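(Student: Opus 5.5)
The plan is to reduce everything to the John--Nirenberg inequality for the matrix-valued function $\log \M$. Write $A(x) := \log \M(x)$, a symmetric matrix, and let $\bar A := \dashint_B A\,dx$, so that $(\M)^{\log}_B = e^{\bar A}$. Since $\bar A$ and $A(x)$ need not commute, I will not compare $e^{A}$ and $e^{\bar A}$ through the ratio $e^{A-\bar A}$. Instead I will use the fact that the quantity on the left is meant as $|(\M-(\M)^{\log}_B)((\M)^{\log}_B)^{-1}|$, or a symmetrized version of it; by \eqref{Mcond} and \eqref{Mlog} these are comparable up to a factor $\Lambda$. The pointwise estimate I aim for is
\begin{align*}
\left| e^{A(x)} e^{-\bar A} - \Id\right| \le c(n,\Lambda)\, |A(x)-\bar A|\, e^{c|A(x)-\bar A|}.
\end{align*}

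To get this pointwise bound, I write $e^{A}e^{-\bar A}-\Id = \int_0^1 \frac{d}{dt}\big(e^{\bar A + t(A-\bar A)}\big)\,dt\; e^{-\bar A}$. Duhamel's formula expresses the derivative as $\int_0^1 e^{\sigma(\bar A+t(A-\bar A))}(A-\bar A)e^{(1-\sigma)(\bar A+t(A-\bar A))}\,d\sigma$, so the norm is at most $|A-\bar A|$ times products of norms of exponentials, multiplied by $|e^{-\bar A}|$. The key observation is that, by \eqref{Mcond}, every matrix here has condition number at most $\Lambda$: the spectrum of $\log\M$ lies in an interval of length $\log\Lambda$, and averaging preserves this because the extreme eigenvalues are convex and concave functions. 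Consequently $|e^{sX}| \approx e^{s\lambda_{\max}(X)}$ up to a factor $\Lambda$, and $\lambda_{\max}(\bar A+t(A-\bar A)) \le \lambda_{\max}(\bar A) + t|A-\bar A|$. Multiplying these bounds gives the pointwise estimate above with $c=c(\Lambda)$.

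Taking $L^s$ averages, the left-hand side of the lemma is controlled by $c\big(\dashint_B |A-\bar A|^s e^{cs|A-\bar A|}\big)^{1/s}$. By Hölder, this is at most $c\big(\dashint_B|A-\bar A|^{2s}\big)^{1/(2s)}\big(\dashint_B e^{2cs|A-\bar A|}\big)^{1/(2s)}$. The John--Nirenberg inequality, applied entrywise to $A$ (whose entrywise BMO norms are controlled by $|\log\M|_{\BMO}$ up to $c(n)$), gives two things:
\begin{align*}
\Big(\dashint_B |A-\bar A|^{2s}\Big)^{1/(2s)} &\le c(n)\, s\, |\log \M|_{\BMO(B)}, \\
\dashint_B e^{\lambda|A-\bar A|} &\le 2 \quad \text{whenever } \lambda |\log\M|_{\BMO(B)}\le c_0(n).
\end{align*}
Choosing $\mu$ so small that $2cs\cdot\mu/s \le c_0$ makes the exponential factor at most $2^{1/(2s)}\le 2$, and this yields the bound $cs|\log\M|_{\BMO(B)}$. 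For the scalar weight $\om=|\M|$, the same argument works in the commutative case, where $\om/(\om)^{\log}_B - 1 = e^{\log\om-(\log\om)_B}-1$ and $|e^t-1|\le |t|e^{|t|}$. Together with $|\log\om|_{\BMO}\le 2|\log\M|_{\BMO}$, this gives the scalar statement.

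The main obstacle is the non-commutativity in the pointwise estimate. I must make sure the exponential growth constant depends only on $|A-\bar A|$ and $\Lambda$, and not on the size of $|\bar A|$, since $\M$ may be degenerate or singular. This is exactly where the uniform condition number \eqref{Mcond} is used.
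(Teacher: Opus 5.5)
Your proof is correct: the Duhamel bound gives $|e^{A}e^{-\bar A}-\Id|\le \Lambda\,|A-\bar A|\,e^{|A-\bar A|}$, with $\Lambda$ entering only through $|e^{-\bar A}|\le \Lambda/|e^{\bar A}|$ (for symmetric $X$ one has $|e^{sX}|=e^{s\lambda_{\max}(X)}$ exactly), and H\"older plus John--Nirenberg then finish as you describe. The paper does not prove this lemma itself but quotes it from \cite{Balci22}. Your argument is a self-contained proof built on the ingredients one expects behind that result: a Lipschitz-type bound for the exponential on symmetric matrices, condition-number control of $(\M)^{\log}_B$, and John--Nirenberg.
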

The next lemma provides additional integrability and balance properties of the associated scalar weight under the small log-BMO condition.
\begin{lem}[{\cite[Proposition 6]{Balci22}}]\label{oms}
    There exists $\beta(n,\Lambda)>0$ which satisfies the followings:
    \begin{enumerate}
        \item If $|\log \om|_{BMO(B)} \leq \frac{\beta}{s}$ for $s\geq1$, then
        \begin{align*}
            \left(\dashint_B \om^s \right)^{1/s} \leq 2(\om)^{\log}_B \quad \text{ and } \quad \left(\dashint_B \om^{-s} \right)^{1/s} \leq \frac{2}{(\om)^{\log}_B}.
        \end{align*}
        \item If $|\log \om|_{BMO(B)} \leq \beta \min \left\{ \frac{1}{p}, \frac{1}{(\theta p)'}\right\}$ for $p \in (1,\infty)$ and $\theta \in (0,1]$ with $\theta p >1$, then
        \begin{align*}
            \sup_{B' \subset B} \left( \dashint_{B'} \om^p \right)^{1/p} \left( \dashint_{B'} \om^{-(\theta p)'} \right)^{1/(\theta p)'} \leq 4,
        \end{align*}
        where $B'\subset B$ is any ball in $B$.
    \end{enumerate}
\end{lem}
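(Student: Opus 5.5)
The plan is to work with the scalar function $f := \log \om$ and its normalization $g := f - (f)_B$. Since $(\om)^{\log}_B = \exp((f)_B)$, every quantity in the statement reduces to exponential integrals of $g$:
\begin{align*}
\dashint_B \om^{\pm s} \Big/ \big((\om)^{\log}_B\big)^{\pm s} = \dashint_B e^{\pm s g}.
\end{align*}
So it suffices to show that $\dashint_{B} e^{t|g|}\,dx$ is close to $1$ (or at least bounded by a controlled constant) when $t\,|f|_{\BMO(B)}$ is small. The main tool is the John--Nirenberg inequality on balls:
\begin{align*}
|\{x\in B' : |f-(f)_{B'}|>\lambda\}| \le c_1 |B'| \exp\!\big(-c_2\lambda/|f|_{\BMO(B')}\big).
\end{align*}
Here $c_1,c_2$ depend only on $n$, and we use $|f|_{\BMO(B')}\le |f|_{\BMO(B)}$ for every ball $B'\subset B$.

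For part (1), write $K:=|f|_{\BMO(B)}$ and apply the layer-cake formula:
\begin{align*}
\dashint_B e^{s|g|} = 1 + \int_0^\infty s e^{s\lambda}\frac{|\{|g|>\lambda\}|}{|B|}\,d\lambda \le 1 + c_1 \int_0^\infty s e^{(s - c_2/K)\lambda}\,d\lambda = 1 + \frac{c_1 sK}{c_2 - sK}.
\end{align*}
If $sK \le \beta$ with $\beta$ chosen so that $c_1\beta/(c_2-\beta)\le 2^s-1$, which is implied by $\le 1$, the integral is at most $2 \le 2^s$. Taking $s$-th roots gives $(\dashint_B \om^{s})^{1/s}\le 2(\om)^{\log}_B$. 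The same computation with $e^{-sg}$ handles $\om^{-s}$. Actually the bound is by $2^{1/s}\le 2$, which is more than needed.

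For part (2), fix a ball $B'\subset B$ and normalize by $(\om)^{\log}_{B'}$ instead. The product then equals
\begin{align*}
\Big(\dashint_{B'} e^{p g'}\Big)^{1/p}\Big(\dashint_{B'} e^{-(\theta p)' g'}\Big)^{1/(\theta p)'}, \qquad g' := f-(f)_{B'},
\end{align*}
because the factors $(\om)^{\log}_{B'}$ and its inverse cancel. The hypothesis gives $|f|_{\BMO(B')}\le K\le \beta\min\{1/p, 1/(\theta p)'\}$. Applying the part (1) estimate on $B'$, once with exponent $p$ and once with exponent $(\theta p)'$, bounds each factor by $2$, so the product is at most $4$. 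One subtlety is that part (1) was stated for $s\ge1$. This is harmless: $p>1$, and $(\theta p)'>1$ since $\theta p>1$. Moreover, the layer-cake bound holds for any $s>0$ anyway.

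The main obstacle is only bookkeeping of constants. We need $\beta$ to depend only on $n$ (and possibly $\Lambda$, which is harmless). The argument requires $c_2 - \beta>0$ and $c_1\beta/(c_2-\beta)\le1$, i.e. $\beta\le c_2/(1+c_1)$, and $\beta$ is taken to be this value. We must also make sure John--Nirenberg is applied with the BMO seminorm defined by balls contained in $B$, which the definition of $|\cdot|_{\BMO(B)}$ via balls is exactly suited to.
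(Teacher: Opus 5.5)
Your argument is correct and follows essentially the same route as the source: the paper gives no proof of this lemma and imports it from \cite[Proposition 6]{Balci22}, where (1) comes from John--Nirenberg exponential integrability of $\log\om-(\log\om)_{B}$, and (2) from applying (1) on each sub-ball $B'\subset B$ with exponents $p$ and $(\theta p)'$ after normalizing by $(\om)^{\log}_{B'}$. The one step you should cite rather than assert is the John--Nirenberg inequality on a ball for the seminorm defined via sub-balls, which is not literally the classical cube statement but is standard since balls are John domains.
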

For a Young function $\Phi \in \N$ and a weight $\om$, we define the weighted Orlicz space $L^\Phi_\om(\Omega)$ as the set of all measurable function $f$ on $\Omega$ satisfying
\begin{align*}
    \int_\Omega \Phi(|f(x) \om(x)|) \, dx < \infty.
\end{align*}
Then, in light of Lemma \ref{oms}, this space is a reflexive Banach space with the norm
\begin{align*}
    \norm{f}_{L^\Phi_\om(\Omega)}= \inf_{\lambda>0} \left\{\int_\Omega \Phi\left(\frac{|f(x) \om(x)|}{\lambda}\right) \, dx \leq 1\right\},
\end{align*}
provided that $|\log \om|_{BMO(\Omega)} \leq \delta$ with small enough $\delta>0$ depending on $n,\Lambda, i(\Phi)$, and $s(\Phi)$.
We also define weighted the Orlicz-Sobolev space $W^{1,\Phi}_\om(\Omega)$ as the set of all functions $f \in W^{1,1}(\Om)$ with $f, |Df| \in L^\Phi_\om(\Omega)$ with the norm $\norm{f}_{W^{1,\Phi}_\om(\Omega)} = \norm{f}_{L^\Phi_\om(\Omega)} + \norm{|Df|}_{L^\Phi_\om(\Omega)}$.

We introduce weighted Sobolev–Poincaré inequalities in Orlicz spaces.
\begin{lem}[{\cite[Lemma 2.5]{Yang25}}] \label{Orl-Sob}
    Let $\Phi \in \N$ and $v \in W^{1.\Phi}_\om (2B)$ for some ball $B=B_r$.
    Then there exist $\theta\in(0,1)$, $\delta >0$, and $c>0$, depending on $n,\Lambda , i(\Phi)$ and $s(\Phi)$ such that if $|\log \om|_{BMO(B)} \leq \delta$, then
    \begin{align*}
        \dashint_B \Phi \left( \frac{|v-(v)_B|}{r} \om \right) \, dx \leq c \left( \dashint_B \Phi^\theta(|Dv|\om) \, dx \right)^{1/\theta}.
    \end{align*}
\end{lem}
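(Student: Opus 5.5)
We prove the estimate in two stages. First we establish an unweighted Orlicz Sobolev--Poincaré inequality with an exponent $\theta_0<1$ on the right. Then we insert the weight $\om$ using Hölder's inequality together with the reverse-Hölder and $\A_p$-type bounds of Lemma \ref{oms}, which are available once $|\log\om|_{\BMO(B)}\le\delta$ is small.

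\textbf{Unweighted step.} Since $\Phi\in\N$, the function $\Phi$ satisfies \eqref{lamt}. Hence $\Psi:=\Phi^{\theta_0}$ is comparable to a Young function (of the class $\N$) once $\theta_0(1+i(\Phi))>1$. The classical Orlicz Sobolev--Poincaré inequality (Diening--Ettwein type) then gives
\begin{align*}
\dashint_B \Phi\Big(\frac{|v-(v)_B|}{r}\Big)\,dx \le c\Big(\dashint_B \Phi^{\theta_0}(|Dv|)\,dx\Big)^{1/\theta_0}.
\end{align*}

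\textbf{Weighted step.} Rather than applying this inequality directly, we work with power functions, since the weight interacts most cleanly with $L^p$-norms. By \eqref{lamt}, the function $\Phi$ behaves like $t^{p}$ with $p\in[1+i(\Phi),1+s(\Phi)]$, but only as a two-sided power estimate. We therefore proceed through the following steps.

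\begin{enumerate}
\item Normalize. By homogeneity, rescale $v$ so that the right-hand side equals $\Phi(\lambda)$, i.e.\ set $\lambda := \Phi^{-1}\big((\dashint_B\Phi^\theta(|Dv|\om))^{1/\theta}\big)$.
\item Factor the weight. Write $\om=(\om)^{\log}_B\cdot(\om/(\om)^{\log}_B)$. By Lemma \ref{oms}(1), for a large exponent $s$ (chosen depending on $i(\Phi)$ and $s(\Phi)$), we have
\begin{align*}
\Big(\dashint_B (\om/(\om)^{\log}_B)^{\pm s}\Big)^{1/s}\le 2.
\end{align*}
\item Split the left-hand side. Apply Hölder's inequality with exponents $(\sigma,\sigma')$ and use \eqref{lamt}. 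This bounds the left-hand side by
\begin{align*}
c\,\Big(\dashint_B \Phi^{\sigma}\big(|v-(v)_B|\,(\om)^{\log}_B/r\big)\Big)^{1/\sigma},
\end{align*}
with $\sigma>1$ close to $1$, since the factor $(\om/(\om)^{\log}_B)^{(1+s(\Phi))\sigma'}$ has bounded average.
\item Apply the unweighted inequality to the Young function $\Phi^\sigma$. Its exponent can be lowered: choose $\theta_0$ so that $\sigma\theta_0=\theta_1<1$. This bounds the quantity from step 3 by
\begin{align*}
c\Big(\dashint_B\Phi^{\theta_1}\big(|Dv|(\om)^{\log}_B\big)\Big)^{1/\theta_1}.
\end{align*}
\item Reinsert the weight. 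Write $(\om)^{\log}_B=\om\cdot((\om)^{\log}_B/\om)$ and use Hölder's inequality once more with exponents $(\theta/\theta_1,\cdot)$ and Lemma \ref{oms}(1) for negative powers. This gives $\big(\dashint_B\Phi^{\theta}(|Dv|\om)\big)^{1/\theta}$ for some $\theta\in(\theta_1,1)$.
\end{enumerate}

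\textbf{Main obstacle.} The key difficulty is bookkeeping the exponents. In step 4 the unweighted inequality must hold for $\Phi^\sigma$ with an exponent strictly below $1$ on the right; this is the known self-improvement of Sobolev--Poincaré. At the same time, the Hölder losses in steps 3 and 5 must be absorbed while keeping $\theta<1$. This requires choosing $\sigma$ close to $1$ and $s$ large, which in turn determines $\delta\sim\beta/s$.

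The weight-factor estimates must be carried out via \eqref{lamt}, since $\Phi(ab)\le\Phi(a)\max\{b^{1+i},b^{1+s}\}$. Consequently one must also control positive powers of $\om/(\om)^{\log}_B$ in both regimes $b<1$ and $b>1$. We handle this by bounding $\max\{b^{1+i},b^{1+s}\}\le b^{1+i}+b^{1+s}$ and using Lemma \ref{oms} for both exponents.
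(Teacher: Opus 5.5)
The paper does not prove this lemma; it quotes it from \cite[Lemma 2.5]{Yang25}, so there is no in-paper argument to match. Your argument is correct. It uses the same freeze-the-weight mechanism the paper uses in Lemma \ref{MovM} and in the proof of \eqref{Sobclaim}. With $\ovom=(\om)^{\log}_B$, $\sigma>1$ and $\theta_1<\theta<1$ you prove
\begin{align*}
\dashint_B \Phi\Big(\frac{|v-(v)_B|}{r}\,\om\Big)\,dx
&\le c\Big(\dashint_B \Phi^{\sigma}\Big(\frac{|v-(v)_B|}{r}\,\ovom\Big)\,dx\Big)^{1/\sigma}\\
&\le c\Big(\dashint_B \Phi^{\theta_1}(|Dv|\,\ovom)\,dx\Big)^{1/\theta_1}
\le c\Big(\dashint_B \Phi^{\theta}(|Dv|\,\om)\,dx\Big)^{1/\theta}.
\end{align*}
The first and last steps use only \eqref{lamt}, H\"older, and the reverse H\"older bounds of Lemma \ref{oms}(1) for $(\om/\ovom)^{\pm s}$. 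The middle step is the constant-weight inequality for $\Phi^\sigma$ applied to $\ovom v$. Compared with the citation, your route is self-contained and needs only Lemma \ref{oms}(1), not the two-weight bound (2).

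Your ordering is the direction H\"older permits: you raise to $\Phi^\sigma$, $\sigma>1$, on the frozen side, and drop below exponent $1$ only after unfreezing. Lemma \ref{MovM} bounds a quantity with one weight only by a higher-exponent quantity with the other weight. In the parallel passage of the proof of Lemma \ref{Soblem}, the factor $\theta_1<1$ sits on the wrong side at both swaps, so that step should be arranged as you arrange it. Your step 1 is never used and can be dropped.

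The one point that needs more precision is the unweighted inequality.

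\textbf{The condition $\theta_0(1+i(\Phi))>1$ is not enough.} For $n\ge 2$, take $\Phi(t)=t^p$ with $p>\frac{n}{n-1}$ and $\frac1p<\theta_0<\frac{n}{n+p}$. Then $p$ exceeds the Sobolev exponent of $\theta_0 p$, and the inequality fails. What the standard proof gives is that every $\theta_0\in\big(\max\{\frac{1}{1+i(\Phi)},\frac{n-1}{n}\},1\big)$ is admissible. The ingredients are:
\begin{enumerate}
\item the representation $|v-(v)_B|\le c\,I_1(|Dv|\chi_B)$;
\item Jensen's inequality for $\Phi^{\theta_0}$, which is equivalent to a convex function because $\theta_0(1+i(\Phi))>1$;
\item boundedness of $I_1$ from $L^1(B)$ to $L^{1/\theta_0}(B)$, valid when $1/\theta_0<\frac{n}{n-1}$.
\end{enumerate}

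\textbf{Step 4 needs this explicit range, not a ``self-improvement''.} Lowering $\theta_0$ makes the inequality stronger. A black-box statement that some $\theta_0<1$ exists for $\Phi^\sigma$, with $\theta_0$ depending on $\sigma$, would not guarantee $\sigma\theta_0<1$. For $\Phi^\sigma$ the admissible range is $\theta_0\in\big(\max\{\frac{1}{\sigma(1+i(\Phi))},\frac{n-1}{n}\},1\big)$, and you need $\theta_0<\frac1\sigma$. This is possible exactly when $\sigma<\frac{n}{n-1}$.

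\textbf{Order of choices.} Fix $\sigma\in(1,\frac{n}{n-1})$ first. Then choose such a $\theta_0$, set $\theta_1=\sigma\theta_0$, and take any $\theta\in(\theta_1,1)$. Only then choose $\delta$ from Lemma \ref{oms}(1) with the exponents $(1+s(\Phi))\sigma'$ and $(1+s(\Phi))\theta_1(\theta/\theta_1)'$. All parameters then depend only on $n,\Lambda,i(\Phi),s(\Phi)$, as the statement requires.
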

We state the following technical lemma, which will be used later.
\begin{lem}[{\cite[Lemma 3.4]{Han97}}]  \label{tech}
    Let $f:[R/2, R] \rightarrow [0,\infty)$ be a bounded function and $A,B \geq0$, $s,t\geq0$, $\theta \in (0,1)$.
    Assume that
    \begin{align*}
        f(r_1) \leq \theta f(r_2) + \frac{A}{(r_2-r_1)^s}  + \frac{B}{(r_2-r_1)^t}
    \end{align*}
    for any $R/2 \leq r_1< r_2 \leq R$.
    Then there there exists $c=c(s,t,\theta)>0$ such that
    \begin{align*}
        f(R/2) \leq c\left( \frac{A}{R^s} +  \frac{B}{R^t}\right).
    \end{align*}
\end{lem}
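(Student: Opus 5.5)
The plan is the standard iteration argument for this kind of lemma, in the spirit of \cite{Han97}. Fix $R/2 \le r_1 < r_2 \le R$ and define a sequence of radii
\begin{align*}
    \rho_0 = R/2, \qquad \rho_{k+1} = \rho_k + (1-\tau)\tau^k \frac{R}{2},
\end{align*}
where $\tau \in (0,1)$ is a parameter fixed later depending only on $\theta, s, t$. Then $\rho_k$ increases to $R$ and $\rho_{k+1}-\rho_k = (1-\tau)\tau^k R/2$.

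Applying the hypothesis with $r_1=\rho_k$ and $r_2=\rho_{k+1}$ gives
\begin{align*}
    f(\rho_k) \le \theta f(\rho_{k+1}) + \frac{A\, 2^s}{(1-\tau)^s R^s}\tau^{-ks} + \frac{B\, 2^t}{(1-\tau)^t R^t}\tau^{-kt}.
\end{align*}
Iterating from $k=0$ to $k=N-1$ yields
\begin{align*}
    f(R/2) \le \theta^N f(\rho_N) + \frac{2^s A}{(1-\tau)^s R^s}\sum_{k=0}^{N-1} (\theta \tau^{-s})^k + \frac{2^t B}{(1-\tau)^t R^t}\sum_{k=0}^{N-1}(\theta\tau^{-t})^k.
\end{align*}

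The only genuine choice is $\tau$. I would pick $\tau<1$ close to $1$ so that $\theta\tau^{-\max\{s,t\}}<1$, for instance $\tau^{\max\{s,t\}} = (1+\theta)/2$, which works when $\max\{s,t\}>0$; if $s=t=0$, any $\tau$ will do. With this choice both geometric series are bounded by $c(s,t,\theta)$.

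Since $f$ is bounded on $[R/2,R]$, the term $\theta^N f(\rho_N)\le \theta^N\sup f$ tends to $0$ as $N\to\infty$. This is exactly where boundedness of $f$ is needed, and it is the only delicate point of the argument. Letting $N\to\infty$ gives $f(R/2)\le c\,(A R^{-s} + B R^{-t})$, with $c$ depending only on $s,t,\theta$ through $\tau$.
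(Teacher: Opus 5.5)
Your proof is correct and is the same geometric-radii iteration used in the cited source (Han--Lin), which the paper quotes without reproducing: you take radii $\rho_k$ with gaps $(1-\tau)\tau^k R/2$, choose $\tau$ so that $\theta\tau^{-\max\{s,t\}}<1$, and use boundedness of $f$ to make $\theta^N f(\rho_N)\to 0$. The opening line fixing $r_1<r_2$ is never used and can be dropped.
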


\subsection{Main results}
We study the distributional solution of the following weighted double-phase equation:
\begin{align} \label{mainpde}
    \dv\left(\M A(x,\M Du)\right) =\dv \M B(x,\M F) \quad \text{in } \Om.
\end{align}
Throughout this paper, the vector field $A : \Om \times\R^n \rightarrow\R^n$ and its derivative $D_zA: \Om \times\R^n\setminus\{0\} \rightarrow\R^n$ are assumed to be Carath\'eodory maps.
We impose the following structural assumptions: there exist constants $0<\nu\leq L<\infty$ such that
\begin{align} \label{AGHcond}
    \begin{cases}
        |A(x,z)| + |D_zA(x,z)||z|\leq L\left(\frac{G(|z|)}{|z|} + a(x)\frac{H(|z|)}{|z|} \right), \\
        \nu \left(\frac{G(|z|)}{|z|^2}+a(x)\frac{H(|z|)}{|z|^2}\right)|\xi|^2 \leq \left\langle D_z A_G(x,z)\xi,\xi\right\rangle, \\
        |A(x_1,z)-A(x_2,z)| \leq L|a(x_1)-a(x_2)|\frac{H(|z|)}{|z|},
    \end{cases}
\end{align}
for a.e. $x \in \Om$ and for all $z \in \R^n\setminus\{0\}, \xi\in\R^n$, where $G,H \in \N$.
Moreover, the vector field $B:\Om\times\R^n \rightarrow\R^n$ is assumed to be a Carath\'eodory map with the growth condition
\begin{align*}
    |B(x,z)| \leq L\left(\frac{G(|z|)}{|z|} + a(x) \frac{H(|z|)}{|z|}\right).
\end{align*}
We also assume that $0\leq a(x) \in C^{0,\alpha}(\Omega)$ and
\begin{align} \label{kGHcond}
    \sup_{t\geq0} \frac{H(t)}{G(t)+G^{1+i_0}(t)}<\infty, \quad \kappa:=\sup_{t\geq0} \frac{H(t)}{G(t)+G^{1+\frac{\alpha}{n}}(t)} <\infty.
\end{align}
for some $i_0 < i(H)$.

We set
\begin{align*}
    \Psi(x,z) :=G(|z|)+a(x)H(|z|).
\end{align*}
A function $u \in W^{1,1}(\Omega)$ with $\Psi(x, \M Du) \in L^1(\Omega)$ is called a distributional solution to \eqref{mainpde} if
\begin{align*}
    \int_\Omega \inner{A(x,\M Du)}{\M D\varphi} \, dx
    = \int_\Omega \inner{B(x,\M F)}{\M D\varphi} \, dx,
\end{align*}
for every $\varphi \in C^{\infty}_0(\Om)$.
For brevity, we write $\data$ to denote a collection of parameters depending only on the known data:
\begin{align*}
    \data = \data\left(n, \nu, L, \Lambda, \kappa, i(G),s(G),i(H),s(H), \alpha,
    \norm{a}_{C^{0,\alpha}(\Omega)}, \norm{\Psi(x, \M Du)}_{L^1(\Omega)} \right).
\end{align*}
Throughout the paper, $c>0$ denotes a universal constant depending only on $\data$, which may vary from line to line.

We are now ready to state the main result of the paper.
\begin{thm}  \label{Main}
    Let $u \in W^{1,1}(\Om)$ be a distributional solution to \eqref{mainpde} with $\Psi(x,\M Du) \in L^1(\Om)$ under the assumptions \eqref{Mcond}, \eqref{AGHcond} and \eqref{kGHcond}.
    Then for every $\Ups \in \mathcal{N}$, there exists a small number $\delta = \delta(\data, s(\Ups))>0$ such that if 
    \begin{align*}
        |\log \M|_{\BMO(\Omega)} \leq \delta,
    \end{align*}
    then the following implication,
    \begin{align*}
    \Psi(x,\M F) \in L^{\Upsilon}_{loc}(\Om) \Rightarrow \Psi(x,\M Du) \in L^{\Ups}_{loc}(\Omega),
    \end{align*}
    holds.
    Moreover, there exists a radius $r_0 = r_0 (\data, s(\Ups))>0$ such that
    \begin{align*}
        \dashint_{B_{R/2}}\Ups(\Psi(x,\M Du)) \, dx \leq c\Ups\left( \dashint_{B_R} \Psi(x,\M Du) \, dx \right) + c \dashint_{B_R} \Ups(\Psi(x,\M F)) \, dx,
    \end{align*}
    for every ball $B_R \Subset \Om$ with $R \leq r_0 \leq 1$ and for some $c=c(\data, s(\Ups))>1$.
\end{thm}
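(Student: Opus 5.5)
The plan is to run the standard perturbative Calder\'on--Zygmund scheme for double phase problems, as in \cite{Baasandorj20}, with the log-averaged weight used as a frozen constant matrix, as in \cite{Balci22,Cho25}. The argument has three stages.

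\textbf{Stage 1: energy, reverse H\"older and comparison.} We first derive a Caccioppoli inequality for $u$ on a ball $B_r \Subset \Om$, testing with $\eta^{s}(u-(u)_{B_r})$. Testing is admissible once smooth approximation in the weighted Musielak--Orlicz space is available; this is exactly the absence of the Lavrentiev phenomenon from \cite{Byun25}, which is where the condition involving $i_0$ in \eqref{kGHcond} enters. We then combine Caccioppoli with the weighted Sobolev--Poincar\'e inequality, Lemma \ref{Orl-Sob}, applied separately to $G$ and to $H$. In the $H$ part we handle the coefficient $a(x)$ by the usual dichotomy of \cite{Colombo15,Baasandorj20}:
\begin{itemize}
\item if $\inf_{B_r} a \gtrsim r^\alpha$ (the $(G,H)$-phase), $a$ is comparable to a constant on $B_r$;
\item otherwise, $\sup_{B_r} a \lesssim r^\alpha$ (the $G$-phase), and the $H$ term is absorbed into $G^{1+\alpha/n}$ via $\kappa$ and the $L^1$ bound on $\Psi(x,\M Du)$.
\end{itemize}
Lemma \ref{tech} removes the interior term. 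The result is a reverse H\"older (Gehring-type) inequality
\[
\dashint_{B_{r/2}}\Psi(x,\M Du)\,dx\le c\Big(\dashint_{B_r}\Psi^\theta(x,\M Du)\,dx\Big)^{1/\theta}+c\dashint_{B_r}\Psi(x,\M F)\,dx.
\]
By Gehring's lemma this gives a small higher integrability exponent $\sigma_0(\data)$.

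Next we compare $u$ with two reference solutions.
\begin{itemize}
\item Let $w$ solve the homogeneous problem $\dv(\M A(x,\M Dw))=0$ in $B_r$ with $w=u$ on $\partial B_r$. Testing with $u-w$ and using \eqref{VGH} together with Young's inequality \eqref{young} gives
\[
\dashint_{B_r}|V_G(\M Du)-V_G(\M Dw)|^2+a|V_H(\M Du)-V_H(\M Dw)|^2\le \epsilon\dashint_{B_r}\Psi(x,\M Du)+c_\epsilon\dashint_{B_r}\Psi(x,\M F).
\]
\item Freeze the weight: set $\ovM=(\M)^{\log}_{B_r}$ and let $v$ solve $\dv(\ovM A(x_0,\ovM Dv))=0$, with $a$ frozen, or replaced by $\sup a$ / $\inf a$ according to the phase. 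The boundary datum is $w$ on a slightly smaller ball.
\end{itemize}
The error $\M-\ovM$ in the second comparison is controlled by Lemma \ref{M-ovM}: in H\"older's inequality, pair the factor $|(\M-\ovM)\ovM^{-1}|$ in $L^{s}$, with $s$ dual to the higher integrability exponent, against $\Psi(x,\M Dw)$. This yields a bound $c\,s\,\delta\dashint\Psi(x,\M Du)$. To make this work we need higher integrability of $w$ as well, obtained as in the first step, and Lemma \ref{oms} to move $\om$ in and out of averages. For $v$, the equation with constant coefficient matrix $\ovM$ becomes, after the linear change of variables $y=\ovM^{-1}x$ and in view of \eqref{Mlog}, a uniformly elliptic Orlicz or double phase equation with frozen coefficients. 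This gives the Lipschitz bound $\sup_{B_{r/4}}\Psi(x,\ovM Dv)\le c\dashint_{B_{r/2}}\Psi(x,\ovM Dv)$. Passing back from $\ovM$ to $\M$ costs another application of Lemma \ref{M-ovM}.

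\textbf{Stage 2: level sets.} Fix $B_R$ with $R\le r_0$, and take $\lambda_0=\dashint_{B_R}\Psi(x,\M Du)+\delta^{-1}\dashint_{B_R}\Psi(x,\M F)$. For $\lambda\gg\lambda_0$, a Vitali covering of the upper level set $\{\Psi(x,\M Du)>\lambda\}$ by stopping-time balls $B_{r_i}$ gives, via the comparison estimates and the Lipschitz bound for $v$,
\[
|\{\Psi(x,\M Du)>K\lambda\}\cap B_{r_i}|\le \Big(\frac{\epsilon}{K^{1+\sigma_0}}+\ldots\Big)|B_{r_i}|+\frac{c}{\lambda}\int_{B_{5r_i}\cap\{\Psi(x,\M F)>\delta\lambda\}}\Psi(x,\M F).
\]
On each $B_{r_i}$ the averages are comparable to $\lambda$, and $\epsilon$ comes from $\delta$.

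\textbf{Stage 3: integration against $\Upsilon$.} We multiply by $\Ups'(\lambda)$ and integrate in $\lambda$. Using \eqref{lamt} for $\Ups$, we choose $K$ large with $K^{1+s(\Ups)}\epsilon$ small, and then absorb the $\Ups(\Psi(x,\M Du))$ term. A truncation of the level sets makes this absorption legitimate, and Lemma \ref{tech} then handles the radii $R/2\le r_1<r_2\le R$. This gives the estimate on $B_{R/2}$, and hence $\delta=\delta(\data,s(\Ups))$.

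\textbf{Main obstacle.} I expect the hardest step to be the comparison with the frozen-weight problem in the double phase setting. There the $a(x)H$ part must be handled simultaneously with the $\M$ oscillation, so the integrability exponent $s$ in Lemma \ref{M-ovM} has to be tied both to the Gehring exponent (which must be uniform in $\lambda$ and depend only on $\data$) and to the phase dichotomy with $\kappa$. My first attempt would be to do the comparison on $B_r$ only after showing that the Gehring exponent of $w$ is independent of $r\le r_0$, and to pick $r_0$ so that $\|a\|_{C^{0,\alpha}}r_0^\alpha\,\|\Psi(x,\M Du)\|_{L^1}^{\alpha/n}$ is small.
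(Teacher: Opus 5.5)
There is a genuine gap: your overall architecture matches the paper's, but your one-step freezing of $\M$ and $a$ does not close. The shared architecture is the exit-time covering, comparison with a homogeneous problem, a Lipschitz bound for a frozen problem, and integration against $\Ups$ with truncation and Lemma~\ref{tech}. The problem is that you compare $w$ directly with $v$ solving $\dv(\ovM A(x_0,\ovM Dv))=0$.

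Testing with $w-v$ produces the coefficient error $\dashint_B|a(x)-a(x_0)|\,H'(|\ovM Dw|)\,|\ovM Dw-\ovM Dv|\,dx$. In the $G$-phase you are forced to take $a(x_0)=\sup_B a$. A smaller frozen value leaves $\int_B(a(x)-a(x_0))H(|\ovM Dv|)\,dx$, which the energy of $v$ does not control. With $a(x_0)=\sup_B a$, Young's inequality and the frozen energy estimate \eqref{ref3e} reduce the error to $a(x_0)\dashint_B H(|\ovM Dw|)\,dx$, where $a(x_0)\lesssim Kr^\alpha$. The energy of $w$ only controls $\int_B a(x)H(|\M Dw|)\,dx$. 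So via \eqref{kGHcond} you need a gradient estimate of the form
\[
\dashint_{B}G(|\ovM Dw|)^{1+\frac{\alpha}{n}}\,dx\le c\Big(\dashint_{2B}\Psi(x,\M Dw)\,dx\Big)^{1+\frac{\alpha}{n}},
\]
that is, a reverse H\"older inequality with the large gain $1+\frac{\alpha}{n}$. That is Lemma~\ref{aGrev}, imported from \cite{Baasandorj20} for equations without weight. For $w$, whose equation carries the merely log-$\BMO$ weight $\M$, Gehring only yields an exponent $1+\sigma$ with $\sigma$ small. The Sobolev gain in your Stage~1 concerns $u-(u)_B$, not the gradient, so it does not help here.

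The missing idea is the paper's intermediate problem. First replace $\M$ by $\ovM$ while keeping $a(x)$: this is the function $k_i$ on $20B_i$, and the $\M-\ovM$ error is handled by Lemma~\ref{M-ovM} and higher integrability exactly as you describe. Then $\til k_i=|\ovM|k_i$ solves an unweighted generalized double phase equation, so Lemma~\ref{aGrev} applies on $10B_i$. Only afterwards is $a$ frozen, at a maximum point $x_i$ of $a$ on $\overline{10B_i}$.

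A second point concerns your choice of $r_0$. Even with that gain, the $G$-phase error is $c_K\big[r^\alpha+r^\alpha(\dashint_{B_r}\Psi\,dx)^{\alpha/n}\big]\dashint_{B_r}\Psi\,dx$, and
\[
r^\alpha\Big(\dashint_{B_r}\Psi(x,\M Du)\,dx\Big)^{\frac{\alpha}{n}}=c(n)\Big(\int_{B_r}\Psi(x,\M Du)\,dx\Big)^{\frac{\alpha}{n}}.
\]
So in this borderline regime the factor $r^\alpha$ is used up by the averaging. Making $\|a\|_{C^{0,\alpha}}r_0^\alpha\|\Psi(x,\M Du)\|_{L^1}^{\alpha/n}$ small therefore yields nothing. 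What must be small is $\int_{B_r}\Psi(x,\M Du)\,dx$, uniformly over the covering balls.

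Your Stage~1 higher integrability of $u$ does give this quantitatively, via $\int_{B_r}\Psi(x,\M Du)\,dx\le c\,r^{\frac{n\sigma_0}{1+\sigma_0}}\|\Psi(x,\M Du)\|_{L^{1+\sigma_0}(B_R)}$. The price is that $r_0$ then also depends on a norm of $\Psi(x,\M F)$.

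Do not borrow the paper's \eqref{aHDk} for this step. The factor $r_i^{s_1}$ there is drawn from \eqref{tilk}, but \eqref{tilk} bounds averages. Rewriting it for integrals produces a factor $|B_i|^{-\sigma_2}$ that exactly cancels the H\"older gain.
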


\section{Lavrentiev phenomenon and Sobolev-Poincar\'e inequality} \label{sec3}
We begin by recalling a density result which ensures the absence of the
Lavrentiev phenomenon for the weighted Orlicz double-phase functional proved in \cite{Byun25}.
This result allows approximation by smooth functions under a small
log-BMO assumption on the matrix weight.
\begin{lem}[\cite{Byun25}] \label{Lav}
Let ${B\Subset B'\Subset\Om}$, $G,H \in \N$ satisfy \eqref{GHcond} and $\M$ satisfy \eqref{Mcond}.
Then there exists $$\delta_0=\delta_0(i(G),s(G),i(H),s(H),\alpha,n,i_0)>0$$ such that the following holds:
if $f \in W^{1,1}(\Omega)$ with
$\Psi(x,\M Df)\in L^1(B')$
and
\begin{align*}
    |\log\M|_{\BMO(B')}\leq\delta_0,
\end{align*} then there
exists a sequence $\{f_{k}\}\subset C^{\infty}(B)$ such that 
\begin{align*}
    f_k \rightarrow f \text{ in } W^{1,1}(\Omega) \quad \text{ and } \quad \int\Psi(x,\M Df_k) \, dx  \rightarrow  \int\Psi(x,\M Df) \, dx.
\end{align*}
\end{lem}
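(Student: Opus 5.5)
The statement immediately above is Lemma \ref{Lav}, the absence of the Lavrentiev phenomenon. My plan is the standard mollification argument adapted to the weight, via a change of variables that freezes $\M$ at scale.

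\emph{Step 1: mollify.} Set $f_\eps := f * \phi_\eps$ for $\eps < \operatorname{dist}(B,\partial B')$. Then $f_\eps \to f$ in $W^{1,1}(B)$. It therefore suffices to prove two things.
\begin{itemize}
\item[(i)] Pointwise a.e.\ convergence $\Psi(x,\M Df_\eps)\to \Psi(x,\M Df)$. This follows from $Df_\eps \to Df$ a.e.\ along a subsequence.
\item[(ii)] A uniform bound
\begin{align*}
\Psi(x,\M(x) Df_\eps(x)) \le c\,\big[(\Psi(\cdot,\M Df))*\phi_\eps\big](x) + c\,\big(\text{a term that is uniformly integrable in } \eps\big).
\end{align*}
\end{itemize}
Once (i) and (ii) hold, the generalized dominated convergence theorem gives convergence of the energies, because $g*\phi_\eps \to g$ in $L^1$.

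\emph{Step 2: remove the weight.} For $x \in B$, write
\begin{align*}
\M(x)Df_\eps(x) = \int \M(x)\M(y)^{-1}\,\M(y)Df(y)\,\phi_\eps(x-y)\,dy.
\end{align*}
Compare $\M(x)$ and $\M(y)$ with $(\M)^{\log}_{B_\eps(x)}$ using Lemma \ref{M-ovM}. The factor $|\M(x)\M(y)^{-1}|$ is not bounded pointwise. I will therefore use Hölder in the Orlicz setting: Jensen applied to $\Phi=G$ (and to $H$) with the split $|\M(x)\M(y)^{-1}|\le |\M(x)(\M)^{\log,-1}_{B_\eps}|\,|(\M)^{\log}_{B_\eps}\M(y)^{-1}|$. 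The $y$-factor gets absorbed into an average of $\Phi^{1+\sigma}(|\M Df|)$ together with $\big(\dashint \om^{-s}\big)$ terms, controlled by Lemma \ref{oms}. This requires a higher integrability $\Psi(\cdot,\M Df)\in L^{1+\sigma}$, which is not available. So instead I will split $y$ according to whether $|\M(x)\M(y)^{-1}|$ is small or large. On the small set, Jensen gives the convolution bound. On the large set, the contribution is handled by the reverse Hölder bounds for $\om^{\pm s}$ with $s$ large, which is why $\delta_0$ must be small. The $x$-factor contributes $\om(x)^{1+s(\cdot)}/((\om)^{\log})^{1+s(\cdot)}$, which lies in $L^{t}$ for $t>1$ uniformly in $\eps$ by Lemma \ref{oms}.

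\emph{Step 3: the double phase.} Use $a(x)\le a_\eps^-(x)+c\eps^\alpha$, where $a_\eps^-(x)=\inf_{B_\eps(x)}a$. The $a_\eps^-$ part is handled exactly as in Step 2. The remainder is
\begin{align*}
\eps^\alpha H(|\M(x)Df_\eps(x)|).
\end{align*}
By \eqref{GHcond}, $H(t)\lesssim G(t)+G(t)^{1+\alpha/n}$ or $G^{1+i_0}$. By Jensen, $G(|\M Df_\eps|)$ is bounded by a weighted average of $G(|\M Df|)$ over $B_\eps(x)$, which is at most $c\,\eps^{-n}\,\|G(|\M Df|)\|_{L^1}\,(\text{weight factor})$. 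Hence
\begin{align*}
\eps^\alpha G^{1+\alpha/n}(|\M Df_\eps|) \le c\,G(|\M Df_\eps|)\cdot(\text{bounded}),
\end{align*}
and this is then controlled as before. The alternative exponent $1+i_0$ with $i_0<i(H)$ enters here: it handles the case in which the weight factors destroy the exact $\eps^{-n}$ balance. In that case one uses the $H$-growth index to absorb a small power of the weight oscillation, which is again controlled by Lemma \ref{oms} with smallness depending on $i_0$ and $i(H)$.

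\emph{Main obstacle.} The main obstacle is Step 2 and its interaction with Step 3. Making the weight factor uniformly integrable without assuming any higher integrability of $\Psi(\cdot,\M Df)$ is delicate. If the splitting approach proves too lossy, I would first try switching to the weighted convolution $\M^{-1}\big((\M Df)*\phi_\eps\big)$. This is not a gradient, however, so the closing fallback is to use the log-average-frozen matrix $(\M)^{\log}_{B_\eps}$ on a cube partition, with a partition of unity.
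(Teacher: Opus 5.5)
The paper gives no proof of this lemma; it is quoted from \cite{Byun25}. Your proposal therefore has to stand on its own, and its decisive step is missing. Your overall frame is the right one: mollify, get a.e. convergence, conclude by Vitali, and use $a\le a^-_\eps+c\eps^\alpha$. But everything hinges on controlling the unbounded ratio $\om(x)/\om(y)$ inside the mollification, and Step~2 never does this. (By \eqref{Mwcomp} only this scalar ratio matters, so factoring $\M(x)\M(y)^{-1}$ buys nothing.)

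The small/large splitting does not help. On the large set you must still bound $\Phi$ of an average of an unbounded ratio times $\om(y)|Df(y)|$. As you note yourself, pairing bounds on $\dashint\om^{\pm s}$ against $\Phi(\om|Df|)$ by H\"older needs $\Phi(\om|Df|)\in L^{1+\sigma}$, which you do not have.

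On the $x$-side, a uniform $L^t$ bound ($t>1$) on $(\om(x)/(\om)^{\log}_{B_\eps(x)})^{1+s}$ does not make its product with the convolution uniformly integrable, since the convolution is bounded only in $L^1$. So (ii) is not established.

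Step~3 inherits the same defect. The ``weight factor'' is not bounded pointwise, so $\eps^\alpha G^{1+\alpha/n}(|\M Df_\eps|)\le c\,G(|\M Df_\eps|)$ does not follow. The remark on $i_0<i(H)$ is a heuristic, not a step, and the closing list of fallbacks confirms that no working mechanism has been found.

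Two ideas close the gap, with no splitting and no higher integrability.

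First, apply H\"older to $\om|Df|$ \emph{before} applying $\Phi\in\{G,H\}$, with exponent $p=1+i(\Phi)$. By \eqref{Ncond}, $\Phi'(t)t^{-i(\Phi)}$ is nondecreasing, so $s\mapsto\Phi(s^{1/p})$ is convex. Jensen together with Lemma~\ref{oms}(1) for the exponent $p'$ then gives, for $\ov{\om}_x:=(\om)^{\log}_{B_\eps(x)}$,
\[
\Phi\bigl(\ov{\om}_x|Df_\eps(x)|\bigr)\le c\,\Phi\Bigl(\Bigl(\dashint_{B_\eps(x)}(\om|Df|)^{p}\,dy\Bigr)^{1/p}\Bigr)\le c\dashint_{B_\eps(x)}\Phi(\om|Df|)\,dy .
\]
The room comes from $i(\Phi)>0$. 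Hence $\Phi(\om(x)|Df_\eps(x)|)\le c\,\zeta_\Phi(x)\dashint_{B_\eps(x)}\Phi(\om|Df|)\,dy$, where $\zeta_\Phi(x):=(\om(x)/\ov{\om}_x)^{1+i(\Phi)}+(\om(x)/\ov{\om}_x)^{1+s(\Phi)}$. The $a^-_\eps$ part is identical, because $a^-_\eps(x)\le a(y)$ for $y\in B_\eps(x)$.

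In Step~3 the $\eps^{-n}$ balance now holds for $G(\ov{\om}_x|Df_\eps(x)|)$. Consequently
\[
\eps^\alpha H(\om(x)|Df_\eps(x)|)\le c\,\bigl(\zeta_G(x)+\zeta_G(x)^{1+\alpha/n}\bigr)\dashint_{B_\eps(x)}G(\om|Df|)\,dy,
\]
with $c$ depending also on $\|G(\om|Df|)\|_{L^1(B')}$.

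Second, do not look for a pointwise dominant of the $x$-factor. Instead integrate over a set $E\subset B$ and use Fubini. Since $\ov{\om}_x\approx(\om)^{\log}_{B_{2\eps}(y)}$ for $x\in B_\eps(y)$, Lemma~\ref{oms}(1) bounds $\dashint_{B_\eps(y)}\zeta^t\,dx$ uniformly in $y$ and $\eps$, for some $t>1$ and each factor $\zeta$ above. Hence
\[
\int_E\zeta(x)\dashint_{B_\eps(x)}g\,dy\,dx\le c\int g(y)\Bigl(\frac{|E\cap B_\eps(y)|}{|B_\eps|}\Bigr)^{1/t'}dy ,\qquad g\in\{G(\om|Df|),\,aH(\om|Df|)\}.
\]
The right-hand side is small uniformly in $\eps$ when $|E|$ is small. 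To see this, split according to whether $|E\cap B_\eps(y)|\le\tau|B_\eps|$; the exceptional set has measure at most $|E|/\tau$.

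This is exactly the uniform integrability your Vitali argument needs. It requires $\delta_0$ to be small only in terms of the finitely many exponents fed into Lemma~\ref{oms}.
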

This lemma guarantees that smooth functions are dense in the
weighted Sobolev–Orlicz space under the smallness condition on the
logarithmic oscillation of $\M$.
Using this approximation property, we can justify the use of
Sobolev-type test functions in the weak formulation.
\begin{lem} \label{testfun}
    Let $B \Subset B' \Subset \Omega$.
    Assume $G,H \in \N$ and $\M$ satisfy the hypotheses of Lemma \ref{Lav}.
    Let $f \in W^{1,1}(B)$ with $\Psi(x, \M Df) \in L^1(B)$ be a distributional solution of
    \begin{align*}
        \dv \M A(x, \M Du) = \dv \M B(x, \M F) \quad \text{ in } B,
    \end{align*}
    where $F:B\to\R^n$ satisfies $B(x,\M F)\in L^1(B)$.
    Then for every $\varphi \in W^{1,1}_0(B)$ with $\Psi(x,\M D\varphi)\in L^1(B)$, we have
    \begin{align*}
        \int_B \inner{A(x,\M Df)}{\M D\varphi} \, dx = \int_B \inner{B(x,\M F)}{\M D\varphi} \, dx.
    \end{align*}
\end{lem}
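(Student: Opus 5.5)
Since $\varphi$ is only a Sobolev function, the plan is to approximate it by smooth compactly supported functions using Lemma \ref{Lav}, and then pass to the limit in the weak formulation by a dominated-convergence argument in the modular $\Psi$.

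\textbf{Step 1: construct the approximants.} Extend $\varphi$ by zero outside $B$. This keeps $\varphi\in W^{1,1}(\Omega)$, and $\Psi(x,\M D\varphi)\in L^1(B')$ because $D\varphi=0$ off $B$.
\begin{itemize}
\item Apply Lemma \ref{Lav} to $\varphi$ to get smooth $\varphi_k\to\varphi$ in $W^{1,1}$ with $\int\Psi(x,\M D\varphi_k)\,dx\to\int\Psi(x,\M D\varphi)\,dx$.
\item The $\varphi_k$ must have compact support in $B$, which Lemma \ref{Lav} does not give. I would take this from its mollification construction: first dilate $\varphi$ toward the center of $B$ (compact support in $B$ after shrinking), then mollify. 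Alternatively, multiply by cutoffs $\eta_j$ and handle $\varphi D\eta_j$ via the weighted Poincaré/Hardy-type bound of Lemma \ref{Orl-Sob}. I regard this support issue as the main technical obstacle.
\item For each $k$, the weak formulation then holds with $\varphi_k$ as test function.
\end{itemize}

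\textbf{Step 2: upgrade modular convergence to strong convergence.} Pass to a subsequence with $D\varphi_k\to D\varphi$ a.e.
\begin{itemize}
\item By convexity and the $\Delta_2$ property \eqref{lamt},
\[
\Psi\big(x,\M(D\varphi_k-D\varphi)\big)\le c\big(\Psi(x,\M D\varphi_k)+\Psi(x,\M D\varphi)\big).
\]
\item The right-hand side converges in $L^1$. The generalized dominated convergence theorem (Brezis–Lieb type) then gives $\int\Psi(x,\M(D\varphi_k-D\varphi))\,dx\to0$.
\end{itemize}

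\textbf{Step 3: pass to the limit in the weak formulation.} Set $g:=A(x,\M Df)$, so that by \eqref{AGHcond}
\[
|g|\le L\big(G'(|\M Df|)+a\,H'(|\M Df|)\big)\quad\text{up to constants}.
\]
Apply Young's inequality \eqref{young} to $G$ and $H$ separately on the set where $|\M(D\varphi_k-D\varphi)|$ is small compared with $\eps$. This gives
\[
\int |g|\,|\M(D\varphi_k-D\varphi)|\le \eps\int\Psi(x,\M Df)+c_\eps\int\Psi\big(x,\M(D\varphi_k-D\varphi)\big).
\]
More cleanly, $|g|\,|\M(D\varphi_k-D\varphi)|$ is dominated by $c\Psi(x,\M Df)+c\Psi(x,\M(D\varphi_k-D\varphi))$. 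That family is uniformly integrable and tends to $0$ a.e., so Vitali's theorem gives convergence of the left-hand side to $\int\inner{A(x,\M Df)}{\M D\varphi}$.

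For the right-hand side, I need $B(x,\M F)\cdot\M D\varphi_k$ to be controlled in the same way. Young's inequality requires $\Psi(x,\M F)\in L^1(B)$, which is the intended setting, so I would read the hypothesis on $F$ in that sense. If only $B(x,\M F)\in L^1$ is available, I would additionally make the $\varphi_k$ converge with uniformly bounded gradients on compact subsets where $\M$ is controlled, but I expect the intended use to be $\Psi(x,\M F)\in L^1$.

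Letting $k\to\infty$ in the identity for $\varphi_k$ then yields the claimed weak formulation for $\varphi$.
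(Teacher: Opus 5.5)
Your proposal is correct and follows the paper's route. The paper likewise takes approximants $\varphi_k\in C^\infty_0(B)$ from Lemma \ref{Lav}, with $D\varphi_k\to D\varphi$ a.e. and $\Psi(x,\M D\varphi_k)\to\Psi(x,\M D\varphi)$ in $L^1(B)$, and passes to the limit by dominated convergence; your Steps 2--3 make that limit passage explicit, and the two points you flag (compact support of the approximants, which the paper simply asserts from Lemma \ref{Lav}, and the need for $\Psi(x,\M F)\in L^1(B)$ rather than only $B(x,\M F)\in L^1(B)$ to control the right-hand side) are genuine details the paper leaves implicit.
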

\begin{proof}
    By Lemma \ref{Lav}, there exists a sequence $\{\varphi_k\}\subset C^\infty_0(B)$ such that $D\varphi_k\to D\varphi$ a.e. and $\Psi(x,\M D\varphi_k)\to \Psi(x,\M D\varphi)$ in $L^1(B)$.
    Passing to the limit in the weak formulation for smooth test functions and using dominated convergence yields the desired identity.
\end{proof}
For the remainder of the paper, we assume
$$|\log\M|_{\BMO(\Omega)} \leq \delta_0,$$
so that every function $\varphi\in W^{1,1}_0(\Omega)$ satisfying
$\Psi(x,\M D\varphi)\in L^1(\Omega)$
is an admissible test function for distributional solutions.

The following lemma serves as a bridge that allows us to replace the variable weight $\om$ by its average $(\om)^{\log}_B$ at the expense of a exponent.
\begin{lem} \label{MovM}
    Let $0<\sig <\sig_*$. Then there exist $\delta>0$ and $c>0$ depending on $\data$, $\sigma$, and $\sigma_*$ such that if $|\log \om|_{\BMO} \leq \delta $, then for any $f \in L^1(B)$ satisfying $\Psi(x,\om f) \in L^{1+\sig_*}$, we have $\Psi(x, \overline{\om}f) \in L^{1+\sig}(B)$ where $\overline{\om} = (\om)^{\log}_B$ and
    \begin{align} \label{MovMest}
        \left(\dashint_B \Psi(x, \ov{\om}f)^{1+\sig} \, dx\right)^{\frac{1}{1+\sig}} \leq c\left(\dashint_B \Psi(x, \om f)^{1+\sig_*}\, dx\right)^{\frac{1}{1+\sig_*}}.
    \end{align}
\end{lem}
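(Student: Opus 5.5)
The idea is to reduce everything to a pointwise comparison between $\Psi(x,\ov\om f)$ and $\Psi(x,\om f)$, and then close with H\"older's inequality and the reverse-H\"older-type bound on $\om^{-1}$ from Lemma \ref{oms}.

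\textbf{Pointwise step.} Write $\ov\om f = (\ov\om/\om)\,\om f$ and apply \eqref{lamt} to $G$ and to $H$ with $\lambda=\ov\om/\om$. Setting $s_*:=\max\{s(G),s(H)\}$, this gives
\begin{align*}
    \Psi(x,\ov\om f)\le \max\Big\{\Big(\frac{\ov\om}{\om}\Big)^{1+i(G)},\Big(\frac{\ov\om}{\om}\Big)^{1+s_*}\Big\}\,\Psi(x,\om f)\le \Big(1+\Big(\frac{\ov\om}{\om}\Big)^{1+s_*}\Big)\Psi(x,\om f).
\end{align*}
The bound uses the largest exponent for both $G$ and $H$, which is harmless because $a(x)\ge0$ simply multiplies the $H$-term. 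This step is routine.

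\textbf{Integration and H\"older.} Raise the pointwise bound to the power $1+\sig$ and integrate. Apply H\"older with exponents $p=\frac{1+\sig_*}{1+\sig}>1$ and $p'$:
\begin{align*}
    \dashint_B \Psi(x,\ov\om f)^{1+\sig}\,dx \le c\left(\dashint_B \Big(1+\Big(\frac{\ov\om}{\om}\Big)^{(1+s_*)(1+\sig)p'}\Big)dx\right)^{1/p'}\left(\dashint_B \Psi(x,\om f)^{1+\sig_*}\,dx\right)^{\frac{1+\sig}{1+\sig_*}}.
\end{align*}

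\textbf{Controlling the weight factor.} Set $s:=(1+s_*)(1+\sig)p'$. This number depends only on $s(G)$, $s(H)$, $\sig$ and $\sig_*$. Lemma \ref{oms}(1) with this $s$ gives $\big(\dashint_B \om^{-s}\big)^{1/s}\le 2/\ov\om$, provided $|\log\om|_{\BMO(B)}\le \beta/s$. Hence
\begin{align*}
    \dashint_B (\ov\om/\om)^{s}\,dx\le 2^{s},
\end{align*}
and the first factor in the H\"older estimate is bounded by a constant $c(\data,\sig,\sig_*)$. Taking $(1+\sig)$-th roots then yields \eqref{MovMest}. The finiteness of the left-hand side, i.e. $\Psi(x,\ov\om f)\in L^{1+\sig}(B)$, follows from the same estimate.

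\textbf{Main obstacle.} The only real point is choosing $\delta=\beta/s$ so that it is uniform in the data. This forces the strict inequality $\sig<\sig_*$: as $\sig\to\sig_*$, the exponent $p'\to\infty$, so $s\to\infty$ and $\delta\to0$.
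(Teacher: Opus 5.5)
Your proof is correct and is essentially the paper's argument: you use the pointwise bound from \eqref{lamt}, then H\"older with exponents $\frac{1+\sig_*}{1+\sig}$ and $\frac{1+\sig_*}{\sig_*-\sig}$, then Lemma \ref{oms}(1) applied to $\om^{-s}$ to control the weight factor, with your $1+(\ov\om/\om)^{1+s_*}$ playing the role of the paper's $\zeta_G+\zeta_H$. One cosmetic slip: your intermediate maximum should use $\min\{i(G),i(H)\}$ rather than $i(G)$, but this does not affect the argument because the final bound $1+(\ov\om/\om)^{1+s_*}$ holds for any lower exponent in $(0,s_*]$.
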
 
\begin{proof}
Using \eqref{lamt} and H\"older inequality with exponent $\frac{1+\sig_*}{1+\sig}$ and its conjugate $\frac{1+\sig_*}{\sig_*-\sig}$, we have
\begin{align*}
    \dashint_B \Psi(x, \ov{\om}f)^{1+\sig} \, dx &\leq c \dashint_B G\left(\frac{|\ov{\om}|}{|\om|}|\om f|\right)^{1+\sig} + \left(a(x)H\left(\frac{|\ov{\om}|}{|\om|}|\om f|\right)\right)^{1+\sig} \, dx \\
    &\leq c \dashint_B  \zeta_G^{1+\sigma} G\left(|\om f|\right)^{1+\sig}+\zeta_H^{1+\sigma} \left(a(x)H\left(|\om f|\right)\right)^{1+\sig}\,dx \\
    &\leq c \left( \dashint_B \Psi(x, \om f)^{1+\sig_*} \,dx \right)^{\frac{1+\sig}{1+\sig_*}} \left( \dashint_B \zeta_G^{\frac{(1+\sig)(1+\sig_*)}{\sig_*-\sig}} +\zeta_H^{\frac{(1+\sig)(1+\sig_*)}{\sig_*-\sig}}\,dx \right)^{\frac{\sig_*-\sig}{1+\sig_*}},
\end{align*}
where for $\Phi \in \{G,H\}$, $\zeta_\Phi(x)$ is defined by
\begin{align*}
    \zeta_\Phi(x) = \left\{ \left(\frac{|\ov{\om}|}{|\om|} \right)^{1+i(\Phi)} +  \left(\frac{|\ov{\om}|}{|\om|} \right)^{1+s(\Phi)}\right\}.
\end{align*}
Choosing $\delta>0$ depending on $\data$, $\sig$ and $\sig_*$, and applying Lemma \ref{oms}, we obtain $\dashint_B \zeta_\Phi^{\frac{(1+\sig)(1+\sig_*)}{\sig_*-\sig}} \, dx \leq c$, which concludes the proof.
\end{proof}
\begin{rmk}
    Note that the roles of $\om$ and $\ov{\om}$ can be interchanged, and the analogous estimate \eqref{MovMest} in Lemma \ref{MovM} remains valid.
    In particular, if $\Psi(x,\ovom f) \in L^{1+\sig_*}(B)$ and $|\log \om|_{\BMO}$ is sufficiently small, then it follows that $\Psi(x,\om f) \in L^{1+\sig}(B)$.
    Moreover, Lemma \ref{MovM} continues to hold in the special case $\Psi(x,z)=G(|z|)$.
\end{rmk}

Using the previous lemma, we prove a weighted Sobolev–Poincar\'e inequality adapted to the double-phase structure.
\begin{lem} \label{Soblem}
    Let $B=B_r \subset \Omega$ with $r \leq 1$ and $G,H \in \N$ satisfy \eqref{kGHcond}. Then there exists $\delta>0$, depending on $\data$, such that if $|\log \om|_{\BMO} \leq \delta $, the following holds.
    \begin{enumerate}
    \item There exists $\theta(n,\Lambda) \in (0,1)$ such that for any $v \in W^{1,\Psi}_{\om}(2B)$, we have
    \begin{align*}
        \dashint_B \Psi \left( x,\frac{|v-(v)_B|}{r} \om\right) \, dx \leq c \left[ 1+\left( \int_B G(|Dv| \om) \, dx \right)^{\alpha/n}\right] \left[ \dashint_B \Psi(x,| Dv| \om)^\theta \, dx \right]^{1/\theta},
    \end{align*}
    for some $c=c(n, \Lambda, \kappa, i(G),s(G),i(H),s(H), \alpha,
    \norm{a}_{C^{0,\alpha}})>0$.
    \item If $v=0$ on $\partial B \cap B_\rho (y) \neq \emptyset$, where $B_\rho(y)$ satisfies $y \in B$ and $|B_\rho(y) \setminus B| \geq \nu |B_\rho|$ for some $\nu>0$, then there exists $\theta(n,\Lambda) \in (0,1)$ which satisfies
    \begin{align*}
        \dashint_{B_\rho (y) \cap B} \Psi\left(x, \frac{ |v|}{\rho} \om \right) \, dx \leq c \left[ 1+\left( \int_B G(|Dv| \om) \, dx \right)^{\alpha/n}\right] \left[ \dashint_{B_\rho (y) \cap B} \Psi(x,| Dv| \om)^\theta \, dx \right]^{1/\theta},
    \end{align*}
    for some $c=c( n, \Lambda, \kappa, i(G),s(G),i(H),s(H), \alpha,
    \norm{a}_{C^{0,\alpha}}, \nu)>0$.
    \end{enumerate}
\end{lem}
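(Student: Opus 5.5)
We follow the unweighted double phase Sobolev--Poincar\'e argument of Colombo--Mingione and Baasandorj--Byun, with the weighted Orlicz Sobolev--Poincar\'e inequality of Lemma \ref{Orl-Sob} as the basic tool. Fix $B=B_r$, and write $a^+_B:=\sup_B a$ and $a^-_B:=\inf_B a$. The argument splits into the cases $a^-_B \le 4[a]_{\alpha} r^\alpha$ (the $G$-phase) and $a^-_B > 4[a]_{\alpha} r^\alpha$ (the $(G,H)$-phase). Only part (1) is treated in detail. Part (2) follows by the same scheme, with the boundary Sobolev--Poincar\'e inequality used in place of Lemma \ref{Orl-Sob}; that inequality holds because $v=0$ on a portion of $\partial B$ of positive density, so $v$ extends by zero to $B_\rho(y)$.

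\emph{The $(G,H)$-phase.} Here $a^+_B \le 2a^-_B$, so $a(x)\approx a^-_B$ on $B$. Set $\Phi(t):=G(t)+a^-_B H(t)$. Then $\Phi\in\N$ with $i(\Phi)\ge\min\{i(G),i(H)\}$ and $s(\Phi)\le\max\{s(G),s(H)\}$, independently of $a^-_B$. Lemma \ref{Orl-Sob} applied to $\Phi$ gives
\begin{align*}
\dashint_B \Psi\left(x,\frac{|v-(v)_B|}{r}\om\right)dx \le c\dashint_B \Phi\left(\frac{|v-(v)_B|}{r}\om\right)dx \le c\left(\dashint_B \Phi^\theta(|Dv|\om)\,dx\right)^{1/\theta},
\end{align*}
and $\Phi(|Dv|\om)\le c\Psi(x,|Dv|\om)$. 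The smallness $\delta$ of $|\log\om|_{\BMO}$ and the exponent $\theta$ depend only on the Simonenko indices, so both are uniform in $a^-_B$.

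\emph{The $G$-phase.} Here $a(x)\le a^-_B+[a]_\alpha (2r)^\alpha\le c r^\alpha$ on $B$, so
\begin{align*}
\Psi(x,t)\le G(t)+c r^\alpha H(t)\le c\big(G(t)+r^\alpha G(t)^{1+\alpha/n}\big)
\end{align*}
by \eqref{kGHcond}. Set $w:=\frac{|v-(v)_B|}{r}\om$. Lemma \ref{Orl-Sob} with $\Phi=G$ controls $\dashint_B G(w)$ by $(\dashint_B G^\theta(|Dv|\om))^{1/\theta}$, so it remains to bound the extra term $r^\alpha\dashint_B G(w)^{1+\alpha/n}$. Split $G(w)^{1+\alpha/n}=G(w)\cdot G(w)^{\alpha/n}$ and use H\"older's inequality. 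The aim is the bound
\begin{align*}
r^\alpha\dashint_B G(w)^{1+\alpha/n}\,dx \le c\left(\int_B G(|Dv|\om)\,dx\right)^{\alpha/n}\left(\dashint_B G^{\theta}(|Dv|\om)\,dx\right)^{1/\theta}.
\end{align*}
The factor $r^\alpha$ is used to convert the averaged quantity $(\dashint_B G(|Dv|\om))^{\alpha/n}$, which carries $|B|^{-\alpha/n}\approx r^{-\alpha}$, into the unaveraged $(\int_B G(|Dv|\om))^{\alpha/n}$. To obtain this bound, apply a Sobolev--Poincar\'e inequality for $G^{1+\alpha/n}$, or equivalently the inequality for $G$ with an integrability exponent above $1$: the Sobolev gain supplies the extra power $1+\alpha/n$ when $\theta$ is chosen close to $1$, in the form $\left(\dashint_B G(w)^{d}\right)^{1/d}\le c\left(\dashint_B G^\theta(|Dv|\om)\right)^{1/\theta}$ with $d=1+\alpha/n$. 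Finally, $G\le\Psi$ puts the right-hand side in the stated form.

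\emph{Main obstacle.} The difficulty is this higher-exponent Sobolev--Poincar\'e step in the weighted setting. Lemma \ref{Orl-Sob} only gives exponent $1$ on the left-hand side, so it does not supply the power $1+\alpha/n$ directly. To get it, use the unweighted Orlicz Sobolev embedding applied to $\ov\om v$, with $\ov\om=(\om)^{\log}_B$ a constant, which gains an integrability exponent $d>1$. Then transfer between $\om$ and $\ov\om$ via Lemma \ref{MovM} (with $\Psi$ replaced by $G$) and its Remark. Each such transfer costs a small loss in the integrability exponent. These losses are absorbed by choosing $\delta$ small depending on $\data$ and taking $\theta$ close enough to $1$ that $\theta<1$ survives all the transfers.
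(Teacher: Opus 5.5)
Your proposal is correct and follows essentially the same route as the paper. You use the same split according to the size of $a$ on $B$ relative to $[a]_\alpha r^\alpha$. In the $(G,H)$-phase you apply Lemma \ref{Orl-Sob} to the frozen function $G+a_0H$, and in the $G$-phase you use the bound $\Psi\lesssim G+r^\alpha G^{1+\alpha/n}$ together with the weighted $G^{1+\alpha/n}$ Sobolev--Poincar\'e inequality \eqref{Sobclaim}, obtained from the constant-weight case of \cite{Baasandorj20} by transferring between $\om$ and $\ov{\om}$ with Lemma \ref{MovM}.

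Your bookkeeping of those transfers is actually the right one, and it differs from what the paper writes. Each application of Lemma \ref{MovM} needs a strictly larger exponent on the bounding side, and this closes because the constant-weight Sobolev gain exceeds $1+\alpha/n$ strictly when $\theta$ is near $1$. The two transfer inequalities displayed in the paper's proof go the other way, and for constant $\om$ they would be false reverse H\"older inequalities.
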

\begin{proof}
We provide the proof for (1); the boundary case (2) follows by a similar argument.
First, we claim that there exist $\theta_0 \in (0,1)$ and $c>1$ depending on $\data$, such that
\begin{align} \label{Sobclaim}
    \dashint_{B_r} G^{1+\frac{\alpha}{n}} \left( \frac{|v-(v)_{B_r}|}{r}\om\right) \, dx \leq c \left( \dashint_{B_r} G^{\theta_0}(|Dv|\om) \, dx\right)^{\frac{1}{\theta_0}(1+\frac{\alpha}{n})}.
\end{align}
This claim was established in the proof of \cite[Theorem 4.2]{Baasandorj20} when $\om=1$.
Hence, it also holds for any constant weight.
Using \cite[Lemma 1.2.2]{Kokilashvili91}, there exists $\theta_1 \in (0,1)$ close to 1 such that $G^{\theta_1} \in \N$.
Applying Lemma \ref{MovM} with sufficiently small $\delta>0$, we find that
\begin{align*}
    \dashint_{B_r} G^{1+\frac{\alpha}{n}} \left( \frac{|v-(v)_{B_r}|}{r}\om\right) \, dx \leq c  \left(\dashint_{B_r} G^{\left(1+\frac{\alpha}{n}\right)\theta_1} \left( \frac{|v-(v)_{B_r}|}{r}\ov{\om}\right)  \, dx\right)^{\frac{1}{\theta_1}}.
\end{align*}
Applying the claim \eqref{Sobclaim} to the constant weight $\ov{\om}$ and the Young function $G^{\theta_1} \in \N$, there exists $\theta_2 \in (0,1)$ depending on $\data$ and $\theta_1$ such that
\begin{align*}
    \left(\dashint_{B_r} G^{\left(1+\frac{\alpha}{n}\right)\theta_1} \left( \frac{|v-(v)_{B_r}|}{r}\ov{\om}\right) \right)^{\frac{1}{\theta_1}} \, dx \leq c \left( \dashint_{B_r} G^{\theta_1\theta_2}(|Dv|\ov{\om})  \, dx \right)^{\frac{1}{\theta_1\theta_2}(1+\frac{\alpha}{n})}.
\end{align*}
Applying Lemma \ref{MovM} once more with small enough $\delta>0$, we get
\begin{align*}
     \left( \dashint_{B_r} G^{\theta_1\theta_2}(|Dv|\ov{\om})\right)^{\frac{1}{\theta_1\theta_2}(1+\frac{\alpha}{n})} \, dx \leq c \left( \dashint_{B_r} G^{\theta_1^2\theta_2}(|Dv|\om) \, dx \right)^{\frac{1}{\theta_1^2\theta_2}(1+\frac{\alpha}{n})}.
\end{align*}
Therefore, the claim \eqref{Sobclaim} holds with $\theta_0=\theta_1^2\theta_2 \in (0,1)$.

We complete the proof by considering two cases depending on the behavior of $a(x)$.
Suppose that 
\begin{align} \label{sobcase}
    \sup_{B_r} a(x) \leq 4[a]_{\alpha} r^\alpha.
\end{align}
By applying the claim \eqref{Sobclaim} and H\"older inequality, we obtain
\begin{align*}
    \dashint_B \Psi \left( x,\frac{|v-(v)_B|}{r} \om\right) \, dx &\leq c  \dashint_B G \left( \frac{|v-(v)_B|}{r} \om\right) \, dx + cr^\alpha \dashint_B G^{1+\frac{\alpha}{n}} \left( \frac{|v-(v)_B|}{r} \om\right)  \, dx\\
    & \leq c  \left[ 1+ r^\alpha \left( \dashint_B G^{\theta_0} \left( |Dv| \om\right)  \, dx \right)^{\frac{\alpha}{\theta_0n}} \right] \left( \dashint_B G^{\theta_0} \left( |Dv| \om\right) \, dx\right)^{\frac{1}{\theta_0}} \\
    &\leq c  \left[ 1+ \left( \int_B G \left( |Dv| \om\right)  \, dx \right)^{\frac{\alpha}{n}} \right] \left( \dashint_B \Psi^{\theta_0} \left( x,|Dv| \om\right) \, dx\right)^{\frac{1}{\theta_0}}.
\end{align*}
Conversely, suppose that \eqref{sobcase} fails.
Then there exists $y \in B_r$ such that
\begin{align*}
    a(y) > 4[a]_{\alpha} r^\alpha.
\end{align*}
Then we have $\frac{a(y)}{2}\leq a(x) \leq 2a(y)$, which implies $\frac{\Psi(y,z)}{2}\leq \Psi(x,z) \leq 2\Psi(y,z)$.
Therefore, using Lemma \ref{Orl-Sob} with $\Phi(t)= \Psi(y,t)$, there exists $\ov\theta_0 \in (0,1)$ depending on $\data$ such that
\begin{align*}
    \dashint_B \Psi \left( x,\frac{|v-(v)_B|}{r} \om\right) \, dx &\leq c \dashint_B \Psi \left( y,\frac{|v-(v)_B|}{r} \om   \right)  \, dx \\
    &\leq c\left(\dashint_B \Psi(y,| Dv| \om)^{\ov\theta_0} \, dx \right)^{\frac{1}{\ov\theta_0}}  \leq  c\left(\dashint_B \Psi(x,| Dv| \om)^{\ov\theta_0}  \, dx \right)^{\frac{1}{\ov\theta_0}}.
\end{align*}
Combining both cases and setting $\theta = \max\{\theta_0,\ov\theta_0\} \in (0,1)$, we arrive at the desired result.
\end{proof}

\section{Properties of some reference problems} \label{sec4}

In this section, we present several properties of the reference problems that will be used in our proof.
Let $B \Subset \Omega$ be a ball, $G, H \in \N$ satisfy \eqref{GHcond}, and a weight $\M$ satisfy \eqref{Mcond} with $|\log\M|_{\BMO(B')}\leq\delta_0$.
We set
\begin{align*}
    \Psi_\om(x,z) = G(\om(x)|z|) + a(x)H(\om(x)|z|). 
\end{align*}
Note that $\Psi_\om(x,z) = \Psi(x, \om(x)z)$, and $\Psi_\om(x,z) \approx \Psi(x,\M z)$ by \eqref{Mwcomp}.

We define the Musielak-Orlicz space $L^{\Psi_\om}(\Omega)$ as the set of all measurable functions $f \in L^1(\Omega)$ such that
\begin{align*}
    \int_\Om \Psi_\om(x,|f(x)|) \, dx < \infty. 
\end{align*}
Then by virtue of Lemma \ref{oms}, this space becomes a reflexive Banach space when equipped with the Luxembourg norm,
\begin{align*}
    \norm{f}_{L^{\Psi_\om}(\Om)}= \inf_{\lambda>0} \left\{\int_\Omega \Psi_\om\left(x,\frac{|f(x)|}{\lambda}\right) \, dx \leq 1\right\},
\end{align*}
provided that $|\log \om|_{BMO(\Omega)} \leq \delta$ for a sufficiently small $\delta>0$ depending on $\data$.
We also define weighted Orlicz-Sobolev space $W^{1,\Psi_\om}(\Omega)$ as the set of all functions $f \in W^{1,1}(\Omega)$ such that $f, |Df| \in L^{\Phi_\om}(\Omega)$,  with the norm $\norm{f}_{W^{1,\Psi_\om}(\Omega)} = \norm{f}_{L^{\Psi_\om}(\Omega)} + \norm{Df}_{L^{\Psi_\om}(\Omega)}$.
Then $W^{1,\Psi_\om}(\Omega)$ can be approximated by smooth functions by Lemma \ref{Lav}.
Therefore, it is possible to define $W^{1,\Psi_\om}_0(\Om)$ as the closure of $C^\infty_0(\Om)$ in $W^{1,\Psi_\om}(\Om)$.
For further details on Musielak-Orlicz spaces and their corresponding Sobolev spaces, we refer the reader to \cite{Harjulehto19,Musielak83,Harjulehto16,Benkirane14}.

\subsection{Reference problem 1.}
We consider the Dirichlet problem:
\begin{align} \label{ref1}
    \begin{cases}
        \dv\left(\M A(x,\M Dh)\right) =0 \quad \text{ in } B,\\
        h=h_0\quad \text{ on } \partial B,
    \end{cases}
\end{align}
where $h_0 \in W^{1,1}(B)$ is such that $\Psi(x,\M Dh_0) \in L^1(B)$.
Using the monotonicity method in Musielak--Orlicz space, one can establish the existence of a unique solution $h$ such that $h-h_0 \in W^{1,\Psi_\om}_{0}(B)$.
\begin{lem} \label{ref1e}
    Let $h$ be the solution of \eqref{ref1}. Then the following properties hold:
    \begin{enumerate}
        \item There exists a constant $c>0$ depending on $\data$, such that
        \begin{align*}
        \int_B \Psi(x, \M Dh) \, dx \leq c\int_B \Psi(x,\M Dh_0) \, dx. 
        \end{align*}
        \item There exist  $\sigma_1>0$ and $c>0$ depending on $\data$ and $\norm{H(x,\M Dh)}_{L^1(B)}$ such that $H(x,\M Dh) \in L^{1+\sigma_1}_{\loc}(B)$ and
        \begin{align*}
            \dashint_{\frac{1}{2}B} \Psi(x, \M Dh)^{1+\sigma_1} \, dx \leq c\left(\dashint_B \Psi(x,\M Dh) \, dx\right)^{1+\sigma_1}.
        \end{align*}
        \item If $H(x,\M Dh_0) \in L^{1+\sigma_*}(B)$ for some $\sigma_*>0$, then there exist $\sigma \in (0,\sigma_*)$ and $c>0$ depending on $\data$, $\sigma_*$, and $\norm{H(x,\M Dh)}_{L^1(B)}$ such that $H(x,\M Dh) \in L^{1+\sigma}(B)$ and
        \begin{align*}
            \dashint_{B} \Psi(x, \M Dh)^{1+\sigma} \, dx \leq c\dashint_B \Psi(x,\M Dh_0)^{1+\sigma} \, dx.
        \end{align*}
    \end{enumerate}
\end{lem}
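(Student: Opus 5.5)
For (1), I would test the weak formulation of \eqref{ref1} with $\varphi = h-h_0$. This choice is admissible by Lemma \ref{testfun}, since $h-h_0\in W^{1,\Psi_\om}_0(B)$ and $\Psi(x,\M D(h-h_0))\lesssim \Psi(x,\M Dh)+\Psi(x,\M Dh_0)$, which holds by the $\Delta_2$ property coming from \eqref{lamt}. The ellipticity in \eqref{AGHcond}, integrated along segments, gives $\inner{A(x,z)}{z}\gtrsim \Psi(x,z)$. The growth bound gives $|A(x,z)||w|\lesssim \frac{G(|z|)}{|z|}|w|+a\frac{H(|z|)}{|z|}|w|$. Applying Young's inequality \eqref{young} separately to $G$ and $H$ with a small $\epsilon$ and absorbing yields $\int_B\Psi(x,\M Dh)\le c\int_B\Psi(x,\M Dh_0)$.

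For (2), I would prove a Caccioppoli inequality and then apply Gehring. Take $\varphi=\eta^{q}(h-(h)_{B_\rho})$, with $\eta$ a cutoff between $B_{r_1}$ and $B_{r_2}$ and $q\ge 1+s(H)$. After using \eqref{Mwcomp} to pass from $|\M\,\cdot|$ to $\om|\cdot|$, the same absorption as in (1) gives
\begin{align*}
\int_{B_{r_1}}\Psi(x,\M Dh)\,dx\le \epsilon\int_{B_{r_2}}\Psi(x,\M Dh)\,dx + c\int_{B_{r_2}}\Psi\left(x,\frac{|h-(h)_{B_{r_2}}|}{r_2-r_1}\om\right)dx .
\end{align*}
Lemma \ref{tech} removes the $\epsilon$ term, producing a clean Caccioppoli inequality on $B_{\rho/2}\subset B_\rho$. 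Next, Lemma \ref{Soblem}(1) bounds the right-hand side by $c[1+(\int G(|Dh|\om))^{\alpha/n}][\dashint\Psi(x,|Dh|\om)^\theta]^{1/\theta}$. Here the prefactor is controlled by $\norm{\Psi(x,\M Dh)}_{L^1(B)}$ through (1) and $\om\approx|\M|$. This is why the constants depend on that norm; since $\rho\le 1$, no power of $\rho$ is needed. We then have a reverse Hölder inequality with exponent $\theta<1$ on every ball $B_{2\rho}\subset B$. Gehring's lemma gives $\sigma_1>0$ together with the stated estimate on $\frac12 B$.

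For (3), I would run the same scheme up to the boundary. Test with $\eta^q(h-h_0)$ on balls $B_\rho(y)$ with $y\in B$. There are two cases. If $B_{\rho}(y)$ is well inside $B$, we are in the interior situation above. If $B_\rho(y)$ meets $\partial B$, then $h-h_0$ vanishes on $\partial B\cap B_\rho(y)$, and the ball condition $|B_\rho(y)\setminus B|\ge \nu|B_\rho|$ holds uniformly for balls of a ball. So Lemma \ref{Soblem}(2) applies to $v=h-h_0$. This yields
\begin{align*}
\dashint_{B_{\rho/2}(y)\cap B}\Psi(x,\M Dh)\le c\left(\dashint_{B_\rho(y)\cap B}\Psi(x,\M Dh)^\theta\right)^{1/\theta}+c\dashint_{B_\rho(y)\cap B}\Psi(x,\M Dh_0).
\end{align*}
Extending by zero outside $B$ and applying Gehring's lemma with a right-hand side in $L^{1+\sigma_*}$ gives integrability $1+\sigma$ for some $\sigma<\sigma_*$. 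The global estimate then follows by covering and by using (1) to control the lower-order averages.

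I expect the main obstacle to be the Caccioppoli step with the variable weight and the non-uniform double-phase structure. The Young-inequality absorption must be carried out separately for the $G$ and $aH$ parts. The weight must also be handled consistently: $\M$ inside $A$ against $\om$ in $\Psi_\om$, via \eqref{Mwcomp}. In the boundary case, the difficulty is making sure that the Sobolev--Poincaré constant in Lemma \ref{Soblem}(2) is uniform over all boundary balls.
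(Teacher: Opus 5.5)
Your proposal is correct and follows the paper's proof in all three parts. For (1) you test with $h-h_0$ and use Young's inequality; for (2) you combine a Caccioppoli inequality with Lemma \ref{Soblem}(1) and Gehring's lemma; for (3) you use the boundary version with $\eta^s(h-h_0)$, Lemma \ref{Soblem}(2), and Gehring with a right-hand side.

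The only difference is in the Caccioppoli step. The paper applies \eqref{young} with parameter $\eps\eta$ and $s=\max\{s(G),s(H)\}+1$, so the gradient term carries the same weight $\eta^s$ as the left-hand side and is absorbed directly. You instead absorb via Lemma \ref{tech}. That works, provided your display keeps the fixed mean $(h)_{B_\rho}$ from your test function rather than $(h)_{B_{r_2}}$, so that $A$ and $B$ do not depend on $r_1,r_2$.
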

\begin{proof}
(1) We take $h-h_0 \in W^{1,\Psi_\om}_{0}(B)$ as a test function in \eqref{ref1}.
Then 
\begin{align*}
    \int_B \inner{A(x,\M Dh)}{\M Dh} \, dx = \int_B \inner{A(x,\M Dh)}{\M Dh_0} \, dx.
\end{align*}
Applying \eqref{AGHcond} and Young's inequality \eqref{young}, we obtain
\begin{align*}
    \int_B \Psi(x, \M Dh) \, dx&\leq c \int_B \left( G'(\M Dh) + a(x) H'(\M Dh)\right)|\M Dh_0| \, dx \\
    &\leq c\eps\int_B \Psi(x, \M Dh) \, dx + c(\eps)\int_B \Psi(x,\M Dh_0) \, dx,
\end{align*}
which implies (1) by choosing small $\eps>0$.

(2) Fix $y\in B$ such that $B_{2r}(y) \subset B$.
Let $\eta \in C^\infty_0(B_{2r}(y))$ be a cutoff function satisfying $\1_{B_r(y)} \leq \eta \leq \1_{B_{2r}(y)}$ and $|D\eta| \leq 2/r$.
Taking $\eta^{s}(h-(h)_{B_{2r}(y)})$ with $s=\max\{s(G),s(H)\}+1$ as a test function in \eqref{ref1}, and applying Young's inequality \eqref{young}, we have
\begin{align*}
    \int_{B_{2r}(y)} \eta^s \Psi(x,\M Dh) \, dx &\leq c \int_{B_{2r}(y)} \eta^{s-1}  (G'(\M Dh) + a(x) H'(\M Dh)) \frac{|h-(h)_{B_{2r}(y)}|}{r}\om \, dx \\
    &\leq c \int_{B_{2r}(y)} \eta^{s-1} \left( (\eps\eta) G(\M Dh)  + \frac{1}{(\eps \eta)^{s(G)}}G\left( \frac{|h-(h)_{B_{2r}(y)}|}{r}\om\right)\right)\,dx\\
    &+ c \int_{B_{2r}(y)} a(x)\eta^{s-1} \left( (\eps\eta) H(\M Dh)  + \frac{1}{(\eps \eta)^{s(H)}}H\left( \frac{|h-(h)_{B_{2r}(y)}|}{r}\om\right)\right) \,dx \\
    &\leq c\eps \int_{B_{2r}(y)} \eta^{s} \Psi(x,\M Dh)\,dx +c(\eps)\int_{B_{2r}(y)} \Psi\left(x,\frac{|h-(h)_{B_{2r}(y)}|}{r}\om \right) \, dx.
\end{align*}
Choosing small $\epsilon>0$, we obtain
\begin{align*}
     \int_{B_{2r}(y)} \eta^s \Psi(x,\M Dh) \, dx\leq c\int_{B_{2r}(y)} \Psi\left(x,\frac{|h-(h)_{B_{2r}(y)}|}{r}\om \right) \, dx.
\end{align*}
Applying the weighted Sobolev-Poincar\'e inequality (Lemma \ref{Soblem}), we find
\begin{align*}
     \dashint_{B_{2r}(y)} \Psi\left(x,\frac{|h-(h)_{B_{2r}(y)}|}{r}\om \right) \, dx \leq c \left(\dashint_{B_{2r}(y)} \Psi(x,\M Dh)^\theta \, dx \right) ^{1/\theta}
\end{align*}
for some $\theta \in (0,1)$.
Combining these estimates yields the reverse H\"older inequality
\begin{align*}
     \dashint_{B_{r}(y)} \Psi(x,\M Dh) \, dx \leq c \left(\dashint_{B_{2r}(y)} \Psi(x,\M Dh)^\theta \, dx \right) ^{1/\theta}.
\end{align*}
Finally, the desired higher integrability result (2) follows from a direct application of Gehring's Lemma.

(3) Fix $y \in B$ such that the ball $B_{2r}(y)$ satisfies $|B_{2r}(y) \setminus B| > |B_{2r}(y)| /10$.
Let $\eta \in C^\infty_0(B_{2r}(y))$ be a cutoff function such that $\1_{B_r(y)} \leq \eta \leq \1_{B_{2r}(y)}$ and $|D\eta| \leq 2/r$.
We take $\eta^s(h-h_0)$ with $s=\max\{s(G),s(H)\}+1$ as a test function in \eqref{ref1}.
This yields
\begin{align*}
    \int_{B \cap B_{2r}(y)} \eta^s \Psi(x, \M Dh) \, dx &\leq c \int_{B \cap B_{2r}(y)} \eta^{s-1}  (G'(\M Dh) + a(x) H'(\M Dh)) \frac{|h-h_0|}{r}\om \, dx \\
    &+ c \int_{B \cap B_{2r}(y)} \eta^{s} \Psi(x, \M Dh_0) \, dx .
\end{align*}
Following the same argument used in the proof of (2), we obtain
\begin{align*}
    \int_{B \cap B_{2r}(y)} \eta^s \Psi(x, \M Dh) \, dx &\leq c \int_{B \cap B_{2r}(y)}  \Psi\left(x,\frac{|h-h_0|}{r}\om \right) \, dx + c \int_{B \cap B_{2r}(y)} \eta^{s} \Psi(x, \M Dh_0) \, dx.
\end{align*}
By applying the weighted Sobolev-Poincar\'e inequality (Lemma \ref{Soblem}) and recalling the assertion (1), 
we have
\begin{align*}
   \dashint_{B \cap B_{2r}(y)}  \Psi\left(x,\frac{|h-h_0|}{r}\om \right) \, dx  &\leq  c \left(\dashint_{B \cap B_{2r}(y)} \Psi(x, \M(Dh- Dh_0))^\theta  \, dx\right)^{1/\theta} \\
   &\leq c \left(\dashint_{B \cap B_{2r}(y)} \Psi(x, \M Dh)^\theta  \, dx\right)^{1/\theta} + c\dashint_{B \cap B_{2r}(y)} \Psi(x, \M Dh_0)  \, dx.
\end{align*}
Therefore, we arrive at the reverse H\"older inequality
\begin{align*}
    \dashint_{B_{r}(y)} \Psi(x, \M Dh)\1_B  \, dx \leq c \left(\dashint_{B_{2r}(y)} (\Psi(x, \M Dh) \1_B)^\theta \, dx\right)^{1/\theta} + c\dashint_{B_{2r}(y)} \Psi(x, \M Dh_0) \1_B \, dx.
\end{align*}
The desired higher integrability result (3) follows from Gehring's Lemma.
\end{proof}
We use the following notation:
\begin{align*}
    \overline{\M} = \Log{\M}_B, \quad \text{ and } \quad \overline{\om} = \Log{\om}_B.
\end{align*}

\subsection{Reference problem 2.}
We consider the Dirichlet problem with the averaged matrix weight
\begin{align} \label{ref2}
    \begin{cases}
        \dv\left(\ov{\M} A(x,\ov{\M} Dk)\right) =0 \quad \text{ in } B,\\
        k=k_0\quad \text{ on } \partial B,
    \end{cases}
\end{align}
where $k_0 \in W^{1,1}(B)$ satisfies $\Psi(x,\ovM Dk_0) \in L^{1+\sig_0}(B)$ for some $\sig_0>0$, and
\begin{align*}
    \norm{\Psi(x,\ovM Dk_0)}_{L^1(B)} \leq c_0,
\end{align*}
for some $c_0>0$.
Then the following regularity properties can be established using arguments similar to those in Lemma \ref{ref1e}.
\begin{lem} \label{ref2e}
    Let $k$ be the solution to \eqref{ref2}.
    Then the following properties hold.
    \begin{enumerate}
        \item There exists a constant $c>0$ depending on $\data$ such that
        \begin{align*}
        \int_B \Psi(x, \ovM Dk) \, dx \leq c\int_B \Psi(x,\ovM Dk_0) \, dx. 
        \end{align*}
        \item There exist $\sigma \in (0,\sigma_0)$ and $c>0$ depending only on $\data$, $\sigma_0$, and $\norm{H(x,\ovM Dk)}_{L^1(B)}$ such that $H(x,\ovM Dk) \in L^{1+\sigma}(B)$ and
        \begin{align*}
            \dashint_{\frac{1}{2}B} \Psi(x, \ovM Dk)^{1+\sigma} \, dx &\leq c\left(\dashint_B \Psi(x,\ovM Dk) \, dx\right)^{1+\sigma},\\
            \dashint_{B} \Psi(x, \ovM Dk)^{1+\sigma} \, dx &\leq c\dashint_B \Psi(x,\ovM Dk_0)^{1+\sigma} \, dx.
        \end{align*}
    \end{enumerate}
\end{lem}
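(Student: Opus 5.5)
The plan is to follow the proof of Lemma \ref{ref1e} step by step, with the variable weight $\M$ replaced by the constant matrix $\ovM$. Because $\ovM$ is constant and satisfies \eqref{Mlog}, the problem \eqref{ref2} is a generalized double phase equation for the modified vector field $\til{A}(x,z) := \ovM A(x,\ovM z)$, up to the scaling by $\ov{\om}=|\ovM|$. Since $\Lambda^{-1}\ov{\om}|\xi|\le|\ovM\xi|\le\ov{\om}|\xi|$, the function $\Psi(x,\ovM z)$ is comparable to $\Psi(x,\ov{\om}|z|)$. All constants therefore depend only on $\data$, and no smallness of $|\log\M|_{\BMO}$ is needed at this stage. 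Existence and uniqueness of $k$ with $k-k_0\in W^{1,\Psi_{\ov{\om}}}_0(B)$ follow from the monotonicity method. Density of smooth functions for the constant weight follows from Lemma \ref{Lav}, since a constant weight has zero $\BMO$ seminorm. Hence $k-k_0$ and its cut-off versions are admissible test functions, as in Lemma \ref{testfun}.

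For (1), we test \eqref{ref2} with $k-k_0$. The ellipticity in \eqref{AGHcond} (through \eqref{VGH} with $z_2=0$) gives $\inner{A(x,\ovM Dk)}{\ovM Dk}\gtrsim\Psi(x,\ovM Dk)$. The growth bound together with Young's inequality \eqref{young}, applied separately to $G$ and to $aH$, gives
\begin{align*}
\int_B\Psi(x,\ovM Dk)\,dx\le c\eps\int_B\Psi(x,\ovM Dk)\,dx+c(\eps)\int_B\Psi(x,\ovM Dk_0)\,dx.
\end{align*}
Choosing $\eps$ small absorbs the first term, and (1) follows.

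For (2), we proceed in two parts.

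\textbf{Interior estimate.} Take a ball $B_{2r}(y)\subset B$ and test with $\eta^s(k-(k)_{B_{2r}(y)})$, where $s=\max\{s(G),s(H)\}+1$. This yields a Caccioppoli inequality with right-hand side $\int\Psi\bigl(x,\ov{\om}\,|k-(k)_{B_{2r}(y)}|/r\bigr)\,dx$. Apply Lemma \ref{Soblem}(1) with the constant weight $\ov{\om}$, so the $\BMO$ hypothesis holds trivially. The prefactor $[1+(\int G(|Dk|\ov{\om}))^{\alpha/n}]$ is bounded using (1) together with $\norm{\Psi(x,\ovM Dk_0)}_{L^1}\le c_0$. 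This is where the dependence on $c_0$, equivalently on $\norm{H(x,\ovM Dk)}_{L^1}$, enters. The result is the reverse H\"older inequality
\begin{align*}
\dashint_{B_r(y)}\Psi(x,\ovM Dk)\,dx\le c\Bigl(\dashint_{B_{2r}(y)}\Psi(x,\ovM Dk)^\theta\,dx\Bigr)^{1/\theta}.
\end{align*}
Gehring's lemma then gives the first estimate in (2).

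\textbf{Global estimate.} Take boundary balls $B_{2r}(y)$ with $|B_{2r}(y)\setminus B|>|B_{2r}(y)|/10$ and test with $\eta^s(k-k_0)$. This gives a Caccioppoli inequality with the extra term $\int\eta^s\Psi(x,\ovM Dk_0)\,dx$. Apply Lemma \ref{Soblem}(2) to $k-k_0$, which vanishes on $\partial B$, and split $\Psi(x,\ovM(Dk-Dk_0))\le c\Psi(x,\ovM Dk)+c\Psi(x,\ovM Dk_0)$, using Jensen's inequality on the $k_0$ part. Combining with the interior estimate, we obtain a reverse H\"older inequality for $\Psi(x,\ovM Dk)\1_B$ on all balls, with the extra term $\Psi(x,\ovM Dk_0)\1_B\in L^{1+\sig_0}$. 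Gehring's lemma with this right-hand side yields some $\sig\in(0,\sig_0)$ and the second estimate in (2).

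The main obstacle is keeping the dependence of constants uniform. The constant in Lemma \ref{Soblem} involves $\int_B G(|Dk|\ov{\om})$, which must be bounded through (1) and $c_0$ alone. If the scaling by $\ov{\om}$ causes trouble, I would first rescale by replacing $G,H$ with $G(\ov{\om}\,\cdot),H(\ov{\om}\,\cdot)$. I would then check that \eqref{kGHcond} is preserved up to constants: the ratio in \eqref{kGHcond} is evaluated at $\ov{\om}t$ and remains bounded by $\kappa$. The indices $i,s$ are unchanged, so the unweighted theory of \cite{Baasandorj20} applies directly.
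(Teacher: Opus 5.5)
Your proposal is correct and follows the paper's route. The paper proves this lemma only by pointing back to Lemma \ref{ref1e}, and what you write out is exactly that argument: test with $k-k_0$ for the energy bound, use interior and boundary Caccioppoli inequalities with Lemma \ref{Soblem} (trivially applicable to the constant weight), then apply Gehring's lemma. Your optional rescaling of $G,H$ by $\ov{\om}$ is equivalent to the paper's later normalization $\til{k}=|\ovM|k$ and is not needed for this statement.
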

We now introduce the normalized solution $\til{k}$ defined by
\begin{align*}
    \til{k}(x) = |\ovM|k(x).
\end{align*}
Then $\til{k}$ is the solution of the following Dirichlet problem
\begin{align} \label{ref22}
    \begin{cases}
        \dv \tilde{A}(x, D\til{k}) =0 \quad \text{ in } B,\\
        k= \til{k}_0 :=|\ovM|k_0\quad \text{ on } \partial B,
    \end{cases}
\end{align}
where $\tilde{A}(x,z) = \frac{\ovM}{|\ovM|} A\left(x,\frac{\ovM}{|\ovM|}z\right)$.
Observe that due to \eqref{Mlog}, $\tilde{A}$ satisfies the structural assumptions in \eqref{AGHcond}.
Furthermore, since $|D\til{k}_0| \approx |\ovM Dk_0|$, we have $\Psi(x, D\til{k}_0) \in L^{1+\sig_0}(B)$ and 
\begin{align*}
    \norm{\Psi(x,D\til{k}_0)}_{L^1(B)} \leq c_0.
\end{align*}
Equation \eqref{ref22} belongs to the class of standard Orlicz double-phase problems. Consequently, we can directly apply the reverse H\"older-type inequality established in \cite{Baasandorj20}.
\begin{lem}[{\cite[Theorem 7.1]{Baasandorj20}}] \label{aGrev}
    Let $B=B_{2r}$ and $\til{k}$ be the solution to \eqref{ref22}. Assume that 
    \begin{align*}
        \sup_{B_{r}}a(x) \leq K[a]_\alpha r^\alpha
    \end{align*}
    for some $K \geq 1$.
    Then there exists $c\geq1$ depending on $\data$, $K$ and $ \norm{\Psi(x,D\til{k})}_{L^1(B)}$, such that 
    \begin{align*}
        \left(\dashint_{B_r} G(|D\til{k}|)^{1+\frac{\alpha}{n}} \, dx \right)^{\frac{n}{n+\alpha}} \leq c \dashint_{B_{2r}} \Psi(x,D\til{k}) \, dx.
    \end{align*}
\end{lem}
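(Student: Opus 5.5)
The statement is cited from \cite[Theorem 7.1]{Baasandorj20}, so the plan is to check that \eqref{ref22} falls under the hypotheses of that result and then reproduce its scheme. The first step is structural. By \eqref{Mlog}, the matrix $\ovM/|\ovM|$ is symmetric with all eigenvalues in $[\Lambda^{-1},1]$. Hence $\til A(x,z)=\frac{\ovM}{|\ovM|}A(x,\frac{\ovM}{|\ovM|}z)$ satisfies \eqref{AGHcond} with constants $\nu,L$ replaced by $\nu\Lambda^{-c},L\Lambda^{c}$, where $c$ depends only on $s(G),s(H)$. This uses \eqref{lamt} to compare $G(|\ovM z|/|\ovM|)$ with $G(|z|)$, and likewise for $H$. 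Consequently $\til k$ is a weak solution of an unweighted Orlicz double phase equation with the same $G,H,a$ and the same constant $\kappa$ from \eqref{kGHcond}. Moreover, $\til k-\til k_0\in W^{1,\Psi}_0(B)$ is admissible by Lemma \ref{Lav} with $\M=\Id$.

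For a self-contained argument, I would then run the $G$-phase reverse Hölder scheme on concentric balls $B_{\rho}(x_0)\subset B_{2r}$ with $x_0\in B_r$.

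\emph{Caccioppoli.} Test with $\eta^s(\til k-(\til k)_{B_\rho})$, where $s=\max\{s(G),s(H)\}+1$, exactly as in the proof of Lemma \ref{ref1e}(2). This gives $\dashint_{B_{\rho/2}}\Psi(x,D\til k)\le c\dashint_{B_\rho}\Psi(x,|\til k-(\til k)_{B_\rho}|/\rho)$.

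\emph{Smallness of $a$.} Since $\sup_{B_r}a\le K[a]_\alpha r^\alpha$ and $a\in C^{0,\alpha}$, we get $\sup_{B_\rho}a\le c(K)\,r^\alpha$ for small balls near $B_r$. By \eqref{kGHcond}, the $H$-term is then bounded by $c\,r^\alpha\big(G+G^{1+\alpha/n}\big)$.

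\emph{Sobolev--Poincaré.} Apply \eqref{Sobclaim} with $\om\equiv1$ to the term $G^{1+\alpha/n}$. The factor $r^\alpha\big(\dashint_{B_\rho}G^{\theta_0}(|D\til k|)\big)^{\alpha/(n\theta_0)}$ is controlled by $c\big(\int_{B_{2r}}G(|D\til k|)\big)^{\alpha/n}$ because $\rho\le 2r$. That quantity is bounded in terms of $\norm{\Psi(x,D\til k)}_{L^1(B)}$, and this is exactly why the constant depends on that norm.

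\emph{Gehring.} The three steps combine into a reverse Hölder inequality $\dashint_{B_{\rho/2}}G(|D\til k|)\le c\big(\dashint_{B_\rho}G^{\theta}(|D\til k|)\big)^{1/\theta}$, valid uniformly on the relevant balls. Gehring's lemma then upgrades this to higher integrability of $G(|D\til k|)$.

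The main obstacle is the exponent. Gehring only produces some small $\sigma>0$, while the statement requires the specific exponent $1+\alpha/n$. To reach it I would follow \cite{Baasandorj20}. In the regime where $a$ is small, $\til k$ behaves like a solution of a $G$-growth problem with a perturbation of size $r^\alpha H$. One compares with the frozen $G$-problem, whose gradient is locally bounded (Lieberman-type $L^\infty$ gradient estimates), and controls the error via \eqref{VGH} together with the exponent $1+\alpha/n$ coming from \eqref{kGHcond}. Iterating this comparison, or equivalently running the Sobolev embedding $W^{1,G}\hookrightarrow L^{G^{n/(n-1)}}$ in a bootstrap, yields $G(|D\til k|)\in L^{1+\alpha/n}(B_r)$ with the scale-invariant bound $\big(\dashint_{B_r}G^{1+\alpha/n}\big)^{n/(n+\alpha)}\le c\dashint_{B_{2r}}\Psi(x,D\til k)$. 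Since $\til A$ lies in exactly the class treated there, I would ultimately cite \cite[Theorem 7.1]{Baasandorj20} for this final step, after verifying the structural reduction above.
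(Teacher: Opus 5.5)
Your proposal is correct and matches the paper, which likewise only notes via \eqref{Mlog} that $\til A$ satisfies \eqref{AGHcond}, so that \eqref{ref22} is an unweighted Orlicz double phase problem with the same $G$, $H$, $a$, $\kappa$, and then invokes \cite[Theorem 7.1]{Baasandorj20}. Your intermediate ``self-contained'' sketch is not needed, and as written its Sobolev--Poincar\'e step mis-scales: on $B_\rho$ with $\rho\ll r$ the factor $r^\alpha\rho^{-\alpha}$ is unbounded, so one must split phases scale by scale as in Lemma \ref{Soblem}; this does not affect the argument, because the exponent $1+\frac{\alpha}{n}$ is ultimately taken from the citation.
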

\subsection{Reference problem 3.}
We consider the following Dirichlet problem with frozen coefficients and an averaged matrix weight
\begin{align} \label{ref2}
    \begin{cases}
        \dv\left(\ov{\M} A(x_0,\ov{\M} Dv)\right) =0 \quad \text{ in } B,\\
        v=v_0\quad \text{ on } \partial B,
    \end{cases}
\end{align}
where $x_0 \in B$ is a fixed point.
Setting $a_0 = a(x_0)$, the following energy estimate holds:
\begin{align} \label{ref3e} 
    \dashint_B G(|\ovM Dv|) + a_0 H(|\ovM Dv|) \, dx \leq c\dashint_B G(|\ovM Dv_0|) + a_0 H(|\ovM Dv_0|) \, dx.
\end{align}
As before, we introduce the normalized solution
\begin{align*}
    \til{v}(x) = |\ovM|v(x).
\end{align*}
which satisfies:
\begin{align} \label{ref32}
    \begin{cases}
        \dv \tilde{A}(x_0, D\til{v}) =0 \quad \text{ in } B,\\
        v= \til{v}_0 :=|\ovM|v_0\quad \text{ on } \partial B,
    \end{cases}
\end{align}
where $\tilde{A}(x_0,z) = \frac{\ovM}{|\ovM|} A\left(x_0,\frac{\ovM}{|\ovM|}z\right)$.
Since \eqref{ref32} is an autonomous elliptic equation with Orlicz growth, we can apply the Lipschitz regularity estimates from \cite[Theorem 1.2]{Lieberman91} to obtain the following lemma.
\begin{lem}[\cite{Lieberman91}] \label{vLip}
    Let $\til{v}$ be the solution to \eqref{ref32}, Then there exists $c\geq 1$ depending on $\data$ such that
    \begin{align*}
        \sup_{\frac{1}{2}B} \{ G(D\til{v})+a_0H(D\til{v})\} \leq c \dashint_B G(D\til{v})+a_0H(D\til{v})\, dx.
    \end{align*}
\end{lem}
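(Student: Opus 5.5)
The plan is to reduce Lemma \ref{vLip} to the classical Lipschitz theory of Lieberman for autonomous equations with Orlicz growth. We apply that theory to the single Young function $\Phi(t):=G(t)+a_0H(t)$, with $a_0=a(x_0)\geq 0$ a fixed constant.

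\textbf{Step 1: $\Phi$ is an admissible Young function.} Since $G,H\in\N$, the function $\Phi$ is $C^1$ on $[0,\infty)$ and $C^2$ on $(0,\infty)$, convex and increasing, with $\Phi(0)=0$. Writing $t\Phi''=tG''+a_0tH''$ and $\Phi'=G'+a_0H'$, the bounds \eqref{Ncond} for $G$ and $H$ give
\begin{align*}
\min\{i(G),i(H)\}\leq \frac{t\Phi''(t)}{\Phi'(t)}\leq \max\{s(G),s(H)\}.
\end{align*}
Hence $\Phi\in\N$, and its indices are independent of $a_0$. This uniformity is the key point: none of the constants below depend on $a_0$ or on $x_0$.

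\textbf{Step 2: the structure conditions for $\tilde A(x_0,\cdot)$.} Set $P:=\ovM/|\ovM|$. This matrix is symmetric and positive definite, and by \eqref{Mlog} it satisfies $\Lambda^{-1}|\xi|\leq|P\xi|\leq|\xi|$. From \eqref{AGHcond} at the frozen point $x_0$:
\begin{itemize}
\item the growth bound gives $|\tilde A(x_0,z)|+|D_z\tilde A(x_0,z)||z|\leq c\,\Phi(|z|)/|z|$;
\item since $D_z\tilde A(x_0,z)=P\,D_zA(x_0,Pz)P$, ellipticity gives $\langle D_z\tilde A(x_0,z)\xi,\xi\rangle\geq \nu\,\frac{\Phi(|Pz|)}{|Pz|^2}|P\xi|^2\geq c^{-1}\frac{\Phi(|z|)}{|z|^2}|\xi|^2$.
\end{itemize}
In the ellipticity bound we used \eqref{lamt} together with $|Pz|\approx|z|$ to pass from $|Pz|$ to $|z|$. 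All constants depend only on $\nu,L,\Lambda$ and the indices of $G,H$.

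\textbf{Step 3: apply Lieberman and convert to an energy bound.} Using $\Phi(t)\approx t\Phi'(t)$, these are exactly Lieberman's hypotheses, with growth function $g=\Phi'$ satisfying $i\leq tg'/g\leq s$. Theorem 1.2 of \cite{Lieberman91} then gives $\tilde v\in C^{1,\beta}_{\loc}$ and the sup bound
\begin{align*}
\sup_{\frac12 B}\Phi(|D\tilde v|)\leq c\dashint_B\Phi(|D\tilde v|)\,dx.
\end{align*}
If that theorem is stated in the form $\sup|D\tilde v|\leq c\dashint_B|D\tilde v|$, or as an $L^\infty$--$L^1$ bound for $g(|D\tilde v|)$, we convert it to the energy form as follows. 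Apply the monotone function $\Phi$, and then Jensen in the form $\Phi(\dashint|D\tilde v|)\leq\dashint\Phi(|D\tilde v|)$. The multiplicative constant is absorbed using \eqref{lamt}, with constants depending only on the indices.

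\textbf{Main obstacle.} The hard part is verifying that Lieberman's constants depend only on the indices and structure constants, and not on $a_0$, nor on the size of $|\ovM|$ (which has been normalized away). Some care is also needed with the standard approximation or regularization, if Lieberman assumes more smoothness than the Carath\'eodory $D_zA$ provides. The first thing to try is to note that his estimates are formulated entirely in terms of $\delta\leq tg'/g\leq g_0$, which Step 1 supplies uniformly in $a_0$.
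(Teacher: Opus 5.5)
Your proposal is correct and follows the paper's route: the paper simply invokes Lieberman's Lipschitz estimate for the autonomous Orlicz-growth equation \eqref{ref32} with growth function $G+a_0H$. Your Steps 1--2 supply the verification the paper leaves implicit, namely the index bounds $\min\{i(G),i(H)\}\le t\Phi''/\Phi'\le\max\{s(G),s(H)\}$ uniformly in $a_0$, and the structure conditions for $PA(x_0,P\,\cdot)$ with $P=\ovM/|\ovM|$. One small aside: if Lieberman's bound were stated for $\Phi'(|D\til{v}|)$, the conversion would need Jensen for the convex conjugate $\Phi^*$ (using $\Phi^*(\Phi'(t))=t\Phi'(t)-\Phi(t)\approx\Phi(t)$) rather than for $\Phi$ itself, but your main line does not need this.
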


\section{Proof of Main result} 
\label{sec5}
We are now ready to give the proof of the main theorem, following the approach in \cite{Mingione07}.
\begin{proof}[Proof of Theorem \ref{Main}]
\textit{Step 1 : Exit time argument.}
Let $B_R \Subset \Om$ with $R \leq r_0$ where $r_0$ will be determined later.
We select $r_1,r_2$ such that $R/2 \leq r_1<r_2 \leq R$.
For $R/2\leq s \leq R$ and $\lambda>0$, we define the upper level set by
\begin{align*}
    E(s,\lambda) = \{x \in B_s : \Psi(x,\M(x)Du(x)) > \lambda\}.
\end{align*}
Then for almost every $x_0 \in E(r_1,\lambda)$, we have
\begin{align*}
    \lim_{r\rightarrow 0} \dashint_{B_r(x_0)} \Psi(x,\M Du) + \frac{1}{\delta} \Psi(x, \M F)\, dx > \lambda.
\end{align*}
Moreover, for almost every $x_0 \in E(r_1,\lambda)$ and any radius $\rho \in [\frac{r_2-r_1}{40},r_2-r_1]$, the following upper bound holds.
\begin{align*}
    \dashint_{B_\rho(x_0)} \Psi(x, \M Du) + \frac{1}{\delta} \Psi(x,\M F) \, dx \leq \left( \frac{40R}{r_2-r_1}\right)^n \dashint_{B_R} \Psi(x, \M Du) + \frac{1}{\delta} \Psi(x,\M F) \, dx =: \lambda_0.
\end{align*}
Thus, by choosing $\lambda> \lambda_0$, for almost every $x_0 \in E(r_1,\lambda)$, there exists a radius $r_{x_0} \in (0,\frac{r_2-r_1}{40})$ such that
\begin{align} \label{rx0}
\begin{cases}
    \dashint_{B_{r_{x_0}}(x_0)} \Psi(x, \M Du) + \frac{1}{\delta} \Psi(x,\M F) \, dx =\lambda, \\
    \dashint_{B_{r}(x_0)} \Psi(x, \M Du) + \frac{1}{\delta} \Psi(x,\M F) \, dx < \lambda \text{ for any } r \in (r_{x_0},r_2-r_1].
\end{cases}
\end{align}
Applying Vitali's covering lemma, we have countable family of pairwise disjoint balls $B_{r_{x_i}}(x_i)$ satisfying \eqref{rx0} and 
\begin{align*}
    E(r_1,\lambda) \subset \bigcup_{i \in \mathbb{N}} B_{5r_{x_i}}(x_i) \subset B_{r_2},
\end{align*}
except some negligible set.
For simplicity, we denote $r_i = r_{x_i}$ and $aB_i = B_{ar_{x_i}}(x_i)$.
Since $40r_i \leq r_2-r_1 \leq R$, we obtain
\begin{align} \label{lamcov}
\begin{cases}
    \dashint_{B_i} \Psi(x, \M Du) + \frac{1}{\delta} \Psi(x,\M F) \, dx =\lambda, \\
    \dashint_{40B_i} \Psi(x, \M Du) + \frac{1}{\delta} \Psi(x,\M F) \, dx < \lambda.
\end{cases}
\end{align}

\textit{Step 2 : First comparison estimate.}
For each ball $40B_i$, we consider the weak solution $h_i$ to the following Dirichlet problem
\begin{align} \label{hipde}
    \begin{cases}
    \dv\left(\M A(x,\M Dh_i)\right) =0 \quad \text{ in } 40B_i,\\
    h_i=u\quad \text{ on } \partial 40B_i.
    \end{cases}
\end{align}
By the energy estimate in Lemma \ref{ref1e}, we have
\begin{align} \label{hi1}
    \dashint_{40B_i} \Psi(x, \M Dh_i) \,dx \leq c \dashint_{40B_i} \Psi(x, \M Du) \,dx.
\end{align}
We set $\ovM = \Log{\M}_{20B_i} $.
Then, applying Lemma \ref{MovM}, the higher integrability from Lemma \ref{ref1e}, and \eqref{hi1}, we obtain for some $\sig_1>0$:
\begin{align} \label{hi2}
    \dashint_{20B_i} \Psi(x, \ovM Dh_i)^{1+\frac{\sig_1}{2}} \, dx &\leq c \left(\dashint_{20B_i} \Psi(x, \M Dh_i)^{1+\sig_1} \, dx \right)^{(1+\frac{\sig_1}{2})\frac{1}{1+\sig_1}} \nonumber\\
    &\leq c \left(\dashint_{40B_i} \Psi(x, \M Dh_i)\, dx \right)^{1+\frac{\sig_1}{2}} \nonumber \\
    &\leq c \left(\dashint_{40B_i} \Psi(x, \M Du)\, dx \right)^{1+\frac{\sig_1}{2}},
\end{align}
provided that $\delta>0$ is sufficiently small.
We now establish a comparison estimate between $u$ and $h$.
Using $\varphi = u-h_i \in W^{1,\Psi_\om}_0(40B_i)$ as a test function to \eqref{mainpde} and \eqref{hipde}, we arrive at the following identity:
\begin{align*}
    \dashint_{40B_i}  \inner{A(x, \M Du)- A(x, \M Dh_i)}{\M Du- \M Dh_i}\, dx = \dashint_{40B_i} \inner{B(x,\M F)}{\M Du- \M Dh_i} \, dx.
\end{align*}
Then using \eqref{VGH}, \eqref{young} and \eqref{hi1}, we find that for any $\epsilon \in (0,1)$,
\begin{align} \label{ref1V}
    \dashint_{40B_i}&  |V_G(\M Du) - V_G(\M Dh_i)|^2 + a(x)|V_H(\M Du) - V_H(\M Dh_i)|^2 \, dx \nonumber\\
    &\leq c \dashint_{40B_i}(G'(|\M F|) + a(x) H'(|\M F|))(|\M Du|+ |\M Dh_i|) \, dx \nonumber \\
    &\leq \eps \dashint_{40B_i}\Psi(x, \M Dh_i) + \Psi(x, \M Du) \, dx + c_\eps \dashint_{40B_i} \Psi(x, \M F) \, dx \nonumber \\
    &\leq c\eps \dashint_{40B_i} \Psi(x, \M Du) \, dx + c_\eps \dashint_{40B_i} \Psi(x, \M F) \, dx.
\end{align}

\textit{Step 3 : Second comparison estimate.} 
We now consider the following Dirichlet problem
\begin{align} \label{kipde}
    \begin{cases}
    \dv\left(\ovM A(x,\ovM Dk_i)\right) =0 \quad \text{ in } 20B_i,\\
    k_i=h_i\quad \text{ on } \partial 20B_i,
    \end{cases}
\end{align}
where $\ovM =\Log{\M}_{20B_i}$.
Then by Lemma \ref{ref2e} and \eqref{hi2}, we get
\begin{align} \label{ki1}
    \dashint_{20B_i} \Psi(x, \ovM Dk_i) \,dx &\leq c \dashint_{20B_i} \Psi(x, \ovM Dh_i) \,dx \leq c \dashint_{40B_i} \Psi(x, \M Du) \,dx, \\
    \dashint_{20B_i} \Psi(x, \ovM Dk_i)^{1+\sig_2} \,dx &\leq c \dashint_{20B_i} \Psi(x, \ovM Dh_i)^{1+\sig_2} \,dx \leq c \left(\dashint_{40B_i} \Psi(x, \M Du) \,dx\right)^{1+\sig_2}. \label{ki2}
\end{align}
for some $\sig_2 \in (0,\frac{\sig_1}{2})$.
Moreover, by choosing small enough $\delta>0$, Lemma \ref{MovM} ensures that $\Psi(x, \M Dk_i) \in L^1(20B_i)$.
This justifies the use of $\varphi = h_i-k_i \in W^{1,\Psi_{\om}}_0 \cap W^{1,\Psi_{\ovom}}_0(20B_i)$ as a test function to \eqref{hipde} and \eqref{kipde}, which leads to the following identity
\begin{align*}
    &\dashint_{20B_i}  \inner{A(x, \M Dh_i)- A(x, \ovM Dk_i)}{\M Dh_i- \ovM Dk_i}\, dx \\
    = &\dashint_{20B_i}\inner{A(x, \M Dh_i)}{(\M - \ovM) Dk_i}\, dx - \dashint_{20B_i}\inner{A(x, \ovM Dk_i)}{(\M - \ovM) Dh_i}\, dx.
\end{align*}
Using \eqref{VGH}, \eqref{young} and \eqref{ki1}, we obtain for any $\epsilon \in (0,1)$
\begin{align*}
    \dashint_{20B_i}&  |V_G(\M Dh_i) - V_G(\ovM Dk_i)|^2 + a(x)|V_H(\M Dh_i) - V_H(\ovM Dk_i)|^2 \, dx \\
    &\leq c \dashint_{20B_i}(G'(|\M Dh_i|) + a(x) H'(|\M Dh_i|))|(\M - \ovM)Dk_i| \, dx  \\
    &\quad +c \dashint_{20B_i}(G'(|\ovM Dk_i|) + a(x) H'(|\ovM Dk_i|))|(\M - \ovM)Dh_i| \, dx  \\
    &\leq \eps \dashint_{20B_i}\Psi(x, \M Dh_i) \, dx + c_\eps \dashint_{20B_i} G\left(\frac{|\M-\ovM|}{|\ovM|} |\ovM Dk_i|\right) +a(x)H\left(\frac{|\M-\ovM|}{|\ovM|} |\ovM Dk_i|\right) \, dx  \\
    &\quad+ \eps \dashint_{20B_i}\Psi(x, \ovM Dk_i) \, dx + c_\eps \dashint_{20B_i} G\left(\frac{|\M-\ovM|}{|\ovM|} |\ovM Dh_i|\right) +a(x)H\left(\frac{|\M-\ovM|}{|\ovM|} |\ovM Dh_i|\right) \, dx \\
    &\leq c\eps \dashint_{40B_i}\Psi(x, \M Du) \, dx + c_\eps \dashint_{20B_i} \zeta(x)\Psi(x,\ovM Dk_i) +\zeta(x)\Psi(x,\ovM Dh_i)\, dx,
\end{align*}
where
\begin{align*}
    \zeta(x) =  \left(\frac{|\M-\ov{\M}|}{|\ovM|} \right)^{1+i(G)} +  \left(\frac{|\M-\ov{\M}|}{|\ovM|} \right)^{1+s(G)} +\left(\frac{|\M-\ov{\M}|}{|\ovM|} \right)^{1+i(H)} + \left(\frac{|\M-\ov{\M}|}{|\ovM|} \right)^{1+s(H)}.
\end{align*}
By choosing $\delta>0$ sufficiently small  and applying Lemma \ref{M-ovM} and \eqref{ki2}, we obtain
\begin{align*}
    \dashint_{20B_i} \zeta(x)\Psi(x,\ovM Dk_i)\, dx &\leq \left(\dashint_{20B_i} \zeta(x)^{\frac{1+\sig_2}{\sig_2}} \, dx\right)^{\frac{\sig_2}{1+\sig_2}}\left(\dashint_{20B_i} \Psi(x,\ovM Dk_i)^{1+\sig_2}\, dx\right)^{\frac{1}{1+\sig_2}} \\
    &\leq c\delta^{1+i_1}\dashint_{40B_i}\Psi(x, \M Du) \, dx,
\end{align*}
where $i_1 = \min \{i(G),i(H)\}$. 
Consequently, we have for any $\epsilon \in (0,1)$,
\begin{align} \label{ref2V}
    \dashint_{20B_i}&  |V_G(\M Dh_i) - V_G(\ovM Dk_i)|^2 + a(x)|V_H(\M Dh_i) - V_H(\ovM Dk_i)|^2 \, dx \nonumber \\
    &\leq (c_1\eps + c_\eps \delta^{1+i_1})\dashint_{40B_i}\Psi(x, \M Du) \, dx.
\end{align}

\textit{Step 4 : Third comparison estimate.} 
We choose a point $x_i \in \ov{10B_i}$ such that $a(x)$ attains its supremum
\begin{align*}
    a_i = a(x_i) = \sup_{x \in 10B_i} a(x).
\end{align*}
We then consider the following Dirichlet problem
\begin{align*}
    \begin{cases}
    \dv\left(\ovM A(x_i,\ovM Dv_i)\right) =0 \quad \text{ in } 10B_i,\\
    v_i=k_i\quad \text{ on } \partial 10B_i.
    \end{cases}
\end{align*}
Then the normalized solutions $\til{k}_i(x) = |\ovM|k_i(x)$ and $\til{v}_i(x) = |\ovM|v_i(x)$ satisfy the following equations
\begin{align} \label{tilkivipde}
    \begin{cases}
    \dv\til{A}(x, D\til{k}_i) =0 \quad \text{ in } 20B_i,\\
    \til{k}_i=|\ovM|h_i\quad \text{ on } \partial 20B_i,
    \end{cases} \quad 
    \begin{cases}
    \dv\til{A}(x_i, D\til{v}_i) =0 \quad \text{ in } 10B_i,\\
    \til{v}_i=\til{k}_i\quad \text{ on } \partial 10B_i,
    \end{cases}
\end{align}
where $\tilde{A}(x,z) = \frac{\ovM}{|\ovM|} A\left(x,\frac{\ovM}{|\ovM|}z\right)$.
Then \eqref{ki1} and \eqref{ki2} implies
\begin{align} \label{tilk}
    \dashint_{20B_i} \Psi(x,  D\til{k}_i) \,dx \leq c \dashint_{40B_i} \Psi(x, \M Du) \,dx, \quad \dashint_{20B_i} \Psi(x,  D\til{k}_i)^{1+\sig_2} \,dx \leq c \left(\dashint_{40B_i} \Psi(x, \M Du) \,dx\right)^{1+\sig_2},
\end{align}
since $|D\til{k_i}| \approx |\ovM Dk_i|$. Moreover,  by \eqref{ref3e} we have
\begin{align} \label{vk}
    \dashint_{10B_i} \Psi(x_i, D\til{v}_i) \,dx &\leq c \dashint_{10B_i} \Psi(x_i, D\til{k}_i) \,dx. 
\end{align}
Applying $\varphi = \til{v}_i-\til{k}_i \in W^{1,\Psi}_0(10B_i)$ as a test function to \eqref{tilkivipde}, we obtain
\begin{align*}
    \dashint_{10B_i} \inner{\til{A}(x_i, D\til{v}_i)-\til{A}(x_i, D\til{k}_i)}{D\til{v}_i-D\til{k}_i} \,dx = \dashint_{10B_i} \inner{\til{A}(x, D\til{k}_i)-\til{A}(x_i, D\til{k}_i)}{D\til{v}_i-D\til{k}_i} \,dx.
\end{align*}
Therefore, we get
\begin{align*}
    \dashint_{10B_i}&  |V_G(\ovM Dk_i) - V_G(\ovM Dv_i)|^2 + a(x)|V_H(\ovM Dk_i) - V_H(\ovM Dv_i)|^2 \, dx \\
    &\leq c\dashint_{10B_i}  |V_G( D\til{k}_i) - V_G(D\til{v}_i)|^2 + a_i|V_H(D\til{k}_i) - V_H(D\til{v}_i)|^2 \, dx \\
    &\leq c(\osc_{10B_i}a)\dashint_{10B_i}  H'(|D\til{k}_i|)|D\til{v}_i-D\til{k}_i| \, dx = I.
\end{align*}
For a constant $K\geq 20$ to be determined later, we consider two alternative cases.
\begin{align} \label{GHphase}
    &\inf_{x \in 10B_i} a(x) > K[a]_\alpha r_i^\alpha \quad ((G,H)\text{-phase}), \\
    &\inf_{x \in 10B_i} a(x) \leq K[a]_\alpha r_i^\alpha \quad (G\text{-phase}). \label{Gphase}
\end{align}
We first consider the case of $(G,H)$-phase  \eqref{GHphase}.
We note that
\begin{gather*}
    \osc_{10B_i}a \leq 20[a]_\alpha r^\alpha_i \leq \frac{20}{K}a(x),\\
    a(x) \leq a_i \leq a(x) + \osc_{10B_i}a \leq 2a(x).
\end{gather*}
Using these bounds and \eqref{tilk}, we find that
\begin{align*}
    I &\leq \frac{c}{K}\dashint_{10B_i}  a(x)H'(|D\til{k}_i|)|D\til{v}_i-D\til{k}_i| \, dx \leq \frac{c}{K}\dashint_{10B_i}  a(x)H(|D\til{v}_i|) + a(x)H(|D\til{k}_i|) \, dx \\
    &\leq \frac{c}{K}\dashint_{10B_i}  \Psi(x_i, D\til{k}_i) \, dx  \leq \frac{c}{K}\dashint_{10B_i}  \Psi(x, \ovM Dk_i) \, dx  \\
    &\leq \frac{c}{K}\dashint_{40B_i}\Psi(x, \M Du) \, dx.
\end{align*}
Therefore, we obtain
\begin{align*}
    \dashint_{10B_i}&  |V_G(\ovM Dk_i) - V_G(\ovM Dv_i)|^2 + a(x)|V_H(\ovM Dk_i) - V_H(\ovM Dv_i)|^2 \, dx \leq \frac{c_2}{K}\dashint_{40B_i}\Psi(x, \M Du) \, dx.
\end{align*}
We next consider the case of $G$-phase \eqref{Gphase}. 
Observe that
\begin{align} \label{GHa}
a_i \leq 20[a]_\alpha r^\alpha_i + \inf_{x\in10B_i}a(x) \leq    2K[a]_\alpha r^\alpha_i.
\end{align}
Applying Lemma \ref{aGrev}, we get
\begin{align} \label{revHol}
    \left(\dashint_{10B_i} G(|D\til{k}_i|)^{1+\frac{\alpha}{n}} \, dx \right)^{\frac{n}{n+\alpha}} \leq c \dashint_{10B_i} \Psi(x,D\til{k}_i) \, dx.
\end{align}
Applying Young's inequality with, we obtain for any $\tau \in (0,1)$
\begin{align*}
    I & \leq c\dashint_{10B_i}  a(x)H'(|D\til{k}_i|)|D\til{k}_i| \, dx +c\dashint_{10B_i}  a(x)H'(|D\til{k}_i|)|D\til{v}_i| \, dx\\
    &\leq c \dashint_{10B_i}  a(x_i)H(|D\til{k}_i|) \, dx +\tau \dashint_{10B_i}  a(x_i)H(|D\til{v}_i|) \, dx +\frac{c}{\tau^{s(H)}} \dashint_{10B_i}  a(x_i)H(|D\til{k}_i|) \, dx \\
    &\leq c\left( 1+\frac{1}{\tau^{s(H)}}\right) \dashint_{10B_i}  a(x_i)H(|D\til{k}_i|) \, dx +\tau \dashint_{10B_i}  a(x_i)H(|D\til{v}_i|) \, dx.
\end{align*}
Using \eqref{kGHcond}, \eqref{GHa}, \eqref{revHol}, H\"older inequality and \eqref{tilk}, we get
\begin{align} \label{aHDk}
    \dashint_{10B_i}  a(x_i)H(|D\til{k}_i|) \, dx &\leq c_K r^\alpha_i \dashint_{10B_i}  G(|D\til{k}_i|) + G(|D\til{k}_i|)^{1+\frac{\alpha}{n}} \, dx \nonumber\\
    &\leq c_K r^\alpha_i \left(\dashint_{10B_i}  \Psi(x,|D\til{k}_i|)  \, dx  +\left(\dashint_{10B_i}  \Psi(x,|D\til{k}_i|)  \, dx\right)^{1+\frac{\alpha}{n}}\right)\nonumber \\
    &= c_K  \left(  r^\alpha_i+\left(\int_{10B_i}  \Psi(x,|D\til{k}_i|)  \, dx\right)^{\frac{\alpha}{n}}\right) \dashint_{10B_i}  \Psi(x,|D\til{k}_i|)  \, dx\nonumber \\
    &\leq c_K  \left(  r^\alpha_i+ r_i^{\frac{n\sig_2}{1+\sig_2}}\left(\int_{10B_i}  \Psi(x,|D\til{k}_i|)^{1+\sig_2}  \, dx\right)^{\frac{\alpha}{n(1+\sig_2)}}\right) \dashint_{10B_i}  \Psi(x,|D\til{k}_i|)  \, dx \nonumber\\
    &\leq c_K  \left(  r^\alpha_i+ r_i^{\frac{n\sig_2}{1+\sig_2}}\left(\int_{10B_i}  \Psi(x,\M Du)  \, dx\right)^{\frac{\alpha}{n}}\right) \dashint_{10B_i}  \Psi(x,|D\til{k}_i|)  \, dx\nonumber\\
    &\leq c_K r_i^{s_1} \dashint_{10B_i}  \Psi(x,|D\til{k}_i|)  \, dx,
\end{align}
where $s_1=\min\{\alpha,\frac{n\sig_2}{1+\sig_2} \}$ and $c_K>0$ depends on $\data$ and $K$, which may vary from line to line.
Furthermore, using \eqref{vk} and \eqref{aHDk}, we obtain
\begin{align} \label{aHDv}
    \dashint_{10B_i}  a(x_i)H(|D\til{v}_i|) \, dx &\leq c\dashint_{10B_i}  G(|D\til{k}_i|) + a(x_i)H(|D\til{k}_i|) \, dx \nonumber \\
    &\leq c_K(1+ r_i^{s_1}) \dashint_{10B_i}  \Psi(x,|D\til{k}_i|)  \, dx
\end{align}
By setting $\tau = r_i^{s_2}$ with $s_2 = \frac{s_1}{2s(H)}$, and combining \eqref{aHDk} and \eqref{aHDv}, we have
\begin{align*}
    \dashint_{10B_i}&  |V_G(\ovM Dk_i) - V_G(\ovM Dv_i)|^2 + a(x)|V_H(\ovM Dk_i) - V_H(\ovM Dv_i)|^2 \, dx \leq I \\
    &\leq c_K \left(\left( 1+\frac{1}{\tau^{s(H)}}\right)r_i^{s_1} +\tau(1+r_1^{s_1}) \right)\dashint_{10B_i}  \Psi(x,|D\til{k}_i|)  \, dx\\
    &\leq c_K r_i^{s_2}\dashint_{40B_i}  \Psi(x,\M Du)  \, dx.
\end{align*}
Therefore, combining the results in both cases, we obtain
\begin{align} \label{ref3V}
    \dashint_{10B_i}&  |V_G(\ovM Dk_i) - V_G(\ovM Dv_i)|^2 + a(x)|V_H(\ovM Dk_i) - V_H(\ovM Dv_i)|^2 \, dx \leq \left(\frac{c_2}{K} + c_K r_i^{s_2}\right)\dashint_{40B_i}\Psi(x, \M Du) \, dx.
\end{align}
Observe that for both cases, it follows that
\begin{align*}
    \dashint_{10B_i}  \Psi(x_i,D\til{v}_i)  \, dx  \leq c\dashint_{10B_i}  \Psi(x_i,D\til{k}_i)  \, dx \leq c\dashint_{40B_i}  \Psi(x_i,\M Du)  \, dx.
\end{align*}
Applying the Lipschitz estimate (Lemma \ref{vLip}), we get
\begin{align} \label{vLipu}
    \sup_{5B_i}  \Psi(x,\ovM Dv_i)\leq c\sup_{5B_i} \Psi(x_i,D\til{v}_i) \leq c\dashint_{10B_i}  \Psi(x_i,D\til{v}_i)  \, dx \leq c\dashint_{40B_i}  \Psi(x_i,\M Du)  \, dx.
\end{align}

\textit{Step 5 : Estimates of level sets.} 
Recalling the exit-time condition \eqref{lamcov} and combining the comparison estimates \eqref{ref1V}, \eqref{ref2V}, and \eqref{ref3V}, we obtain
\begin{align} \label{comp}
    \dashint_{10B_i}&  |V_G(\M Du) - V_G(\ovM Dv_i)|^2 + a(x)|V_H(\M Du) - V_H(\ovM Dv_i)|^2 \, dx \leq S\lambda,
\end{align}
where
\begin{align*}
    S=S(\epsilon,\delta,K,r) = c_1\epsilon + c_\epsilon \delta^{i_1} + \frac{c_2}{K} + c_Kr_0^{s_2}.
\end{align*}
Moreover, \eqref{vLipu} and \eqref{lamcov} imply that
\begin{align} \label{vLiplam}
    \sup_{5B_i}  \Psi(x,\ovM Dv_i)\leq  c_l \lambda.
\end{align}
Using \eqref{comp}, \eqref{vLiplam}, and the fact that
\begin{align*}
    \Psi(x,\M Du) \leq 2( |V_G(\M Du) - V_G(\ovM Dv_i)|^2 + a(x)|V_H(\M Du) - V_H(\ovM Dv_i)|^2) +2\Psi(x,\ovM Dv_i),
\end{align*}
we obtain
\begin{align} \label{PsiBi}
    \int_{5B_i \cap \{H(x,\M Du)>4c_l\lambda \}} \Psi(x,\M Du) \, dx \leq 40^nS\lambda|B_i|.
\end{align}
From \eqref{lamcov}, it follows that
\begin{align*}
    |B_i| = \frac{1}{\lambda} \dashint_{B_i} \Psi(x, \M Du) + \frac{1}{\delta} \Psi(x,\M F) \, dx,
\end{align*}
which leads to
\begin{align*}
    |B_i| \leq \frac{2}{\lambda} \dashint_{B_i \cap \{ \Psi(x,Du) > \frac{\lambda}{4}\}} \Psi(x, \M Du) \, dx+ \frac{2}{\lambda} \dashint_{B_i \cap \{ \Psi(x,F) > \frac{\delta\lambda}{4}\}} \frac{1}{\delta}\Psi(x,\M F) \, dx.
\end{align*}
Substituting this into \eqref{PsiBi}, we find
\begin{align*}
    &\int_{5B_i \cap \{H(x,\M Du)>4c_l\lambda \}} \Psi(x,\M Du) \, dx  \\ &\leq 80^nS \int_{B_i \cap \{ \Psi(x,Du) > \frac{\lambda}{4}\}} \Psi(x, \M Du) \, dx+ 80^nS \int_{B_i \cap \{ \Psi(x,F) > \frac{\delta\lambda}{4}\}} \frac{1}{\delta}\Psi(x,\M F) \, dx.
\end{align*}
Since $\{5B_i\}$ is a covering of $E(r_1,\lambda)$ and $\{B_i\}$ is pairwise disjoint, we sum over $i$ to get the following level-set inequality
\begin{align} \label{PsiE}
    &\int_{ E(r_1,\lambda)} \Psi(x,\M Du) \, dx  \nonumber\\ &\leq 80^nS \int_{E(r_2,\frac{\lambda}{16c_l})} \Psi(x, \M Du) \, dx+ 80^nS \int_{B_{r_2} \cap \{ \Psi(x,F) > \frac{\delta\lambda}{16c_l}\}} \frac{1}{\delta}\Psi(x,\M F) \, dx,
\end{align}
for all $\lambda > \lambda_1=4c_l\lambda_0$.

\textit{Step 6 : Conclusion.}
We finalize the proof of Theorem \ref{Main} by employing a truncation argument and integrating over the level sets.
For $t>0$, we define the truncated potential as
\begin{align*}
    [\Psi(x,\M Du)]_t = \min \{ \Psi(x,\M Du),t\}.
\end{align*}
Let $\Ups \in \N$ be a Young function.
using Fubini's theorem and $\Ups'(0)=0$, we obtain for any $t > \lambda_1$,
\begin{align*}
    \int_{B_{r_1}} \Ups'([\Psi(x,\M Du)]_t) \Psi(x,\M Du) \, dx =  \int_0^{t} \Ups''(\lambda)\int_{E(r_1,\lambda)}  \Psi(x,\M Du) \, dxd\lambda \\
    \leq  \Ups'(\lambda_1)\int_{B_{r_2}} \Psi(x,\M Du)\, dx + \int_{\lambda_1}^{t} \Ups''(\lambda)\int_{E(r_1,\lambda)}  \Psi(x,\M Du) \, dxd\lambda.
\end{align*}
Note that we have
\begin{align*}
    \Ups'(\lambda_1)\int_{B_{r_2}} \Psi(x,\M Du)\, dx \leq \left( \frac{cR}{r_2-r_1}\right)^{n(1+s(\Ups))}\Ups\left( \dashint_{B_R} \Psi(x, \M Du) + \frac{1}{\delta} \Psi(x,\M F) \, dx\right).
\end{align*}
Next, we multiply the level-set inequality \eqref{PsiE} by $\Ups''(\lambda)$ and integrate over $\lambda \in (\lambda_1,t)$.
Then we get
\begin{align*}
    \int_{\lambda_1}^{t}& \Ups''(\lambda)\int_{E(r_1,\lambda)}  \Psi(x,\M Du) \, dxd\lambda \\
    &\leq 80^nS \int_{\lambda_1}^{t} \Ups''(\lambda)\int_{E(r_2,\frac{\lambda}{16c_l})} \Psi(x, \M Du) \, dxd\lambda \\
    &+ 80^nS \int_{0}^{\infty} \Ups''(\lambda)\int_{B_{r_2} \cap \{ \Psi(x,F) > \frac{\delta\lambda}{16c_l}\}} \frac{1}{\delta}\Psi(x,\M F) \, dx d\lambda .
\end{align*}
By applying Fubini's theorem, we obtain
\begin{align*}
    \int_{\lambda_1}^{t} \Ups''(\lambda)\int_{E(r_2,\frac{\lambda}{16c_l})} \Psi(x, \M Du) \, dxd\lambda \leq \til{c} c_l^{s(\Ups)} \int_{B_{r_2}} \Ups'([\Psi(x,\M Du)]_t)\Psi(x, \M Du) \, dx,
\end{align*}
\begin{align*}
    \int_{0}^{\infty} \Ups''(\lambda)\int_{B_{r_2} \cap \{ \Psi(x,F) > \frac{\delta\lambda}{16c_l}\}} \frac{1}{\delta}\Psi(x,\M F) \, dx d\lambda  &= \int_{B_{r_2}} \frac{1}{\delta}\Ups'\left(\frac{16c_l}{\delta} \Psi(x,\M F) \right) \Psi(x,\M F) \,dx\\
    &\leq \til{c} c_l^{s(\Ups)}\delta^{-(1+s(\Ups))} 
    \int_{B_{r_2}} \Ups(\Psi(x,\M F)) \, dx.
\end{align*}
Consequently, we arrive at
\begin{align*}
    \int_{B_{r_1}}& \Ups'([\Psi(x,\M Du)]_t) \Psi(x,\M Du) \, dx \ \\ &\leq \til{c} c_l^{s(\Ups)} S \int_{B_{r_2}} \Ups'([\Psi(x,\M Du)]_t)\Psi(x, \M Du) \, dx 
    + \til{c} c_l^{s(\Ups)} \delta^{-(1+s(\Ups))}S 
    \int_{B_{r_2}} \Ups(\Psi(x,\M F)) \, dx \\
    &+\Ups'(\lambda_1)\int_{B_{r_2}} \Psi(x,\M Du)\, dx.
\end{align*}
To apply Lemma \ref{tech}, we need to choose suitable $\epsilon$, $\delta$, $K$ and $r_0$ to ensure that
\begin{align*}
    \til{c} c_l^{s(\Ups)}S \leq \frac{1}{2}.
\end{align*}
First we select $K>1$ and $\epsilon\in (0,1)$ as
\begin{align*}
    K = \max\{ 8\til{c} c_l^{s(\Ups)}c_2,40\}, \quad \text{ and }\quad \epsilon = \frac{1}{8\til{c} c_l^{s(\Ups)}c_1}.
\end{align*}
Then, we choose sufficiently small $\delta_0 >0$ and $r_0 >0$ satisfying
\begin{align*}
    \delta \leq \min \left\{\left(\frac{1}{8\til{c} c_l^{s(\Ups)}c_\eps} \right)^{1/i_1},\delta_0\right\} \quad \text{ and }\quad r_0 \leq \left(\frac{1}{8\til{c} c_l^{s(\Ups)}c_K} \right)^{1/s_2}.
\end{align*}
With these choices, the inequality reduces to
\begin{align*}
    \int_{B_{r_1}}& \Ups'([\Psi(x,\M Du)]_t) \Psi(x,\M Du) \, dx \ \\ &\leq \frac{1}{2} \int_{B_{r_2}} \Ups'([\Psi(x,\M Du)]_t)\Psi(x, \M Du) \, dx 
    + c \int_{B_{r_2}} \Ups(\Psi(x,\M F)) \, dx \\
    &+\left(\frac{cR}{r_2-r_1}\right)^{n(1+s(\Ups))}\Ups\left( \dashint_{B_R} \Psi(x, \M Du) + c \Psi(x,\M F) \, dx\right),
\end{align*}
for any $R/2 \leq r_1<r_2<R$ and $t>\lambda_1$, where $c=c(\data, s(\Ups))>0$.
By applying Lemma \ref{tech}, we have
\begin{align*}
    \int_{B_{R/2}} \Ups'([\Psi(x,\M Du)]_t) \Psi(x,\M Du)  \, dx \ \leq c\Ups \left(\int_{B_{R}} \Psi(x, \M Du) \, dx \right)
    + c \int_{B_{R}} \Ups(\Psi(x,\M F)) \, dx .
\end{align*}
Finally, letting $t \rightarrow \infty$, we  obtain for any $R\leq r_0$
\begin{align*}
    \int_{B_{R/2}} \Ups(\Psi(x,\M Du))  \, dx \ \leq c\Ups \left(\int_{B_{R}} \Psi(x, \M Du) \, dx \right)
    + c \int_{B_{R}} \Ups(\Psi(x,\M F)) \, dx,
\end{align*}
 with $c=c(\data, s(\Ups))>0$.
 This completes the proof of Theorem \ref{Main}.
\end{proof}

\textbf{Data Availability}
Data sharing not applicable to this article as no datasets were generated or analyzed during the current study.

\textbf{Conflict of interest}
The authors declared that they have no conflict of interest to this work.

\bibliography{ref}

@article {Yang25,
    AUTHOR = {Byun, S.-S. and Yang, R.},
     TITLE = {Log-{BMO} matrix weights and quasilinear elliptic equations
              with {O}rlicz growth in {R}eifenberg domains},
   JOURNAL = {J. Lond. Math. Soc. (2)},
  FJOURNAL = {Journal of the London Mathematical Society. Second Series},
    VOLUME = {111},
      YEAR = {2025},
    NUMBER = {4},
     PAGES = {Paper No. e70151, 31},
      ISSN = {0024-6107,1469-7750},
   MRCLASS = {35B65 (35D30 35J25 35J62)},
  MRNUMBER = {4891024},
       DOI = {10.1112/jlms.70151},
       URL = {https://doi.org/10.1112/jlms.70151},
}

@article {Balci22,
    AUTHOR = {Balci, A. K. and Diening, L. and Giova, R. and
              Passarelli di Napoli, A.},
     TITLE = {Elliptic equations with degenerate weights},
   JOURNAL = {SIAM J. Math. Anal.},
  FJOURNAL = {SIAM Journal on Mathematical Analysis},
    VOLUME = {54},
      YEAR = {2022},
    NUMBER = {2},
     PAGES = {2373--2412},
      ISSN = {0036-1410,1095-7154},
  MRNUMBER = {4410267},
   MRCLASS = {35B65 (35J70 35R05)},
MRREVIEWER = {Xiumin\ Du},
       DOI = {10.1137/21M1412529},
       URL = {https://doi.org/10.1137/21M1412529},
}

@article {Byun20,
    AUTHOR = {Byun, S.-S. and Oh, J.},
     TITLE = {Regularity results for generalized double phase functionals},
   JOURNAL = {Anal. PDE},
  FJOURNAL = {Analysis \& PDE},
    VOLUME = {13},
      YEAR = {2020},
    NUMBER = {5},
     PAGES = {1269--1300},
      ISSN = {2157-5045,1948-206X},
   MRCLASS = {49N60 (35B65 35J20 49J10)},
  MRNUMBER = {4149062},
MRREVIEWER = {Elvira\ Mascolo},
       DOI = {10.2140/apde.2020.13.1269},
       URL = {https://doi.org/10.2140/apde.2020.13.1269},
}

@article {Baasandorj20,
    AUTHOR = {Baasandorj, S. and Byun, S.-S. and Oh, J.},
     TITLE = {Calder\'on-{Z}ygmund estimates for generalized double phase
              problems},
   JOURNAL = {J. Funct. Anal.},
  FJOURNAL = {Journal of Functional Analysis},
    VOLUME = {279},
      YEAR = {2020},
    NUMBER = {7},
     PAGES = {108670, 57},
      ISSN = {0022-1236,1096-0783},
   MRCLASS = {35J70 (35B65 42B25 46E35)},
  MRNUMBER = {4107816},
MRREVIEWER = {Juha\ K.\ Kinnunen},
       DOI = {10.1016/j.jfa.2020.108670},
       URL = {https://doi.org/10.1016/j.jfa.2020.108670},
}

@article {Zhikov95,
    AUTHOR = {Zhikov, V. V.},
     TITLE = {On {L}avrentiev's phenomenon},
   JOURNAL = {Russian J. Math. Phys.},
  FJOURNAL = {Russian Journal of Mathematical Physics},
    VOLUME = {3},
      YEAR = {1995},
    NUMBER = {2},
     PAGES = {249--269},
      ISSN = {1061-9208},
  MRNUMBER = {1350506},
   MRCLASS = {49J45 (49J10)},
MRREVIEWER = {Philip\ D.\ Loewen},
}

@article {Zhikov86,
    AUTHOR = {Zhikov, V. V.},
     TITLE = {Averaging of functionals of the calculus of variations and
              elasticity theory},
   JOURNAL = {Izv. Akad. Nauk SSSR Ser. Mat.},
  FJOURNAL = {Izvestiya Akademii Nauk SSSR. Seriya Matematicheskaya},
    VOLUME = {50},
      YEAR = {1986},
    NUMBER = {4},
     PAGES = {675--710, 877},
      ISSN = {0373-2436},
   MRCLASS = {49H05 (73C60)},
  MRNUMBER = {864171},
MRREVIEWER = {Vadim\ Komkov},
}

@article {Colombo15,
    AUTHOR = {Colombo, M. and Mingione, G.},
     TITLE = {Regularity for double phase variational problems},
   JOURNAL = {Arch. Ration. Mech. Anal.},
  FJOURNAL = {Archive for Rational Mechanics and Analysis},
    VOLUME = {215},
      YEAR = {2015},
    NUMBER = {2},
     PAGES = {443--496},
      ISSN = {0003-9527,1432-0673},
   MRCLASS = {49N60 (35B27 35B65)},
  MRNUMBER = {3294408},
MRREVIEWER = {Eugen\ Viszus},
       DOI = {10.1007/s00205-014-0785-2},
       URL = {https://doi.org/10.1007/s00205-014-0785-2},
}

@article {Colombo152,
    AUTHOR = {Colombo, M. and Mingione, G.},
     TITLE = {Bounded minimisers of double phase variational integrals},
   JOURNAL = {Arch. Ration. Mech. Anal.},
  FJOURNAL = {Archive for Rational Mechanics and Analysis},
    VOLUME = {218},
      YEAR = {2015},
    NUMBER = {1},
     PAGES = {219--273},
      ISSN = {0003-9527,1432-0673},
   MRCLASS = {49J10 (49N60)},
  MRNUMBER = {3360738},
MRREVIEWER = {Helmut\ Kaul},
       DOI = {10.1007/s00205-015-0859-9},
       URL = {https://doi.org/10.1007/s00205-015-0859-9},
}

@article {Baasandorj25,
    AUTHOR = {Baasandorj, S. and Byun, S.-S.},
     TITLE = {Regularity for {O}rlicz {P}hase {P}roblems},
   JOURNAL = {Mem. Amer. Math. Soc.},
  FJOURNAL = {Memoirs of the American Mathematical Society},
    VOLUME = {308},
      YEAR = {2025},
    NUMBER = {1556},
     PAGES = {iii+131},
      ISSN = {0065-9266,1947-6221},
      ISBN = {978-1-4704-7304-4; 978-1-4704-8143-8},
   MRCLASS = {49N60 (35B27 35B65 35J20 49J10)},
  MRNUMBER = {4896165},
       DOI = {10.1090/memo/1556},
       URL = {https://doi.org/10.1090/memo/1556},
}

@article {Cho25,
    AUTHOR = {Byun, S.-S. and Cho, Y. and Ryu, S.},
    title={Calder\'{o}n-{Z}ygmund estimates for double phase problems with matrix weights}, 
   JOURNAL = {Calc. Var. Partial Differential Equations},
  FJOURNAL = {Calculus of Variations and Partial Differential Equations},
    VOLUME = {65},
      YEAR = {2026},
    NUMBER = {4},
     PAGES = {Paper No. 121},
      ISSN = {0944-2669,1432-0835},
   MRCLASS = {35B65 (35J70 35J75)},
  MRNUMBER = {5041108},
       DOI = {10.1007/s00526-026-03285-6},
       URL = {https://doi.org/10.1007/s00526-026-03285-6},
}

@book {Kokilashvili91,
    AUTHOR = {Kokilashvili, V. and Krbec, M.},
     TITLE = {Weighted inequalities in {L}orentz and {O}rlicz spaces},
 PUBLISHER = {World Scientific Publishing Co., Inc., River Edge, NJ},
      YEAR = {1991},
     PAGES = {xii+233},
      ISBN = {981-02-0612-7},
   MRCLASS = {42B25 (26D10 42B10 46E30 47B38 47G10)},
  MRNUMBER = {1156767},
MRREVIEWER = {Vladimir\ D.\ Stepanov},
       DOI = {10.1142/9789814360302},
       URL = {https://doi.org/10.1142/9789814360302},
}

@article {Balci23,
    AUTHOR = {Balci, A. K. and Byun, S.-S. and Diening, L. and Lee,
              H.-S.},
     TITLE = {Global maximal regularity for equations with degenerate
              weights},
   JOURNAL = {J. Math. Pures Appl. (9)},
  FJOURNAL = {Journal de Math\'ematiques Pures et Appliqu\'ees. Neuvi\`eme
              S\'erie},
    VOLUME = {177},
      YEAR = {2023},
     PAGES = {484--530},
      ISSN = {0021-7824,1776-3371},
   MRCLASS = {35J70 (35B65 35J25)},
  MRNUMBER = {4629762},
MRREVIEWER = {Filomena\ Feo},
       DOI = {10.1016/j.matpur.2023.07.010},
       URL = {https://doi.org/10.1016/j.matpur.2023.07.010},
}

@article {Cho24,
    AUTHOR = {Byun, S.-S. and Cho, Y.},
     TITLE = {Calder\'on-{Z}ygmund estimates for nonlinear elliptic obstacle
              problems with log-{BMO} matrix weights},
   JOURNAL = {Discrete Contin. Dyn. Syst. Ser. B},
  FJOURNAL = {Discrete and Continuous Dynamical Systems. Series B. A Journal
              Bridging Mathematics and Sciences},
    VOLUME = {29},
      YEAR = {2024},
    NUMBER = {12},
     PAGES = {4772--4792},
      ISSN = {1531-3492,1553-524X},
  MRNUMBER = {4795497},
   MRCLASS = {35J87 (35B65 35J70 35J75 49J40)},
       DOI = {10.3934/dcdsb.2024065},
       URL = {https://doi.org/10.3934/dcdsb.2024065},
}

@article {Cho242,
    AUTHOR = {Byun, S.-S. and Cho, Y. and Lee, H.-S.},
     TITLE = {Elliptic equations with matrix weights and measurable
              nonlinearities on nonsmooth domains},
   JOURNAL = {Proc. Amer. Math. Soc.},
  FJOURNAL = {Proceedings of the American Mathematical Society},
    VOLUME = {152},
      YEAR = {2024},
    NUMBER = {7},
     PAGES = {2963--2982},
      ISSN = {0002-9939,1088-6826},
   MRCLASS = {35B65 (35J70 35J75)},
  MRNUMBER = {4753281},
MRREVIEWER = {Haitao\ Wan},
       DOI = {10.1090/proc/16770},
       URL = {https://doi.org/10.1090/proc/16770},
}

@article{Byun25,
  AUTHOR  = {Byun, S.-S. and Jung, H. and Kim, H.},
  TITLE   = {Absence of the Lavrentiev Phenomenon for Generalized Double-Phase Functionals with Variable Matrix Weights},
  JOURNAL = {Proc. Amer. Math. Soc.},
  YEAR    = {2026},
  NOTE    = { \url{https://doi.org/10.1090/proc/17602}},
}

@article {Baroni18,
    AUTHOR = {Baroni, P. and Colombo, M. and Mingione, G.},
     TITLE = {Regularity for general functionals with double phase},
   JOURNAL = {Calc. Var. Partial Differential Equations},
  FJOURNAL = {Calculus of Variations and Partial Differential Equations},
    VOLUME = {57},
      YEAR = {2018},
    NUMBER = {2},
     PAGES = {Paper No. 62, 48},
      ISSN = {0944-2669,1432-0835},
   MRCLASS = {49N60},
MRREVIEWER = {Elvira\ Mascolo},
       DOI = {10.1007/s00526-018-1332-z},
       URL = {https://doi.org/10.1007/s00526-018-1332-z},
}

@article {Colombo16,
    AUTHOR = {Colombo, M. and Mingione, G.},
     TITLE = {Calder\'on-{Z}ygmund estimates and non-uniformly elliptic
              operators},
   JOURNAL = {J. Funct. Anal.},
  FJOURNAL = {Journal of Functional Analysis},
    VOLUME = {270},
      YEAR = {2016},
    NUMBER = {4},
     PAGES = {1416--1478},
      ISSN = {0022-1236,1096-0783},
   MRCLASS = {35J62 (35B65 35J70)},
  MRNUMBER = {3447716},
MRREVIEWER = {Francesco\ Della Pietra},
       DOI = {10.1016/j.jfa.2015.06.022},
       URL = {https://doi.org/10.1016/j.jfa.2015.06.022},
}

@article {Lieberman91,
    AUTHOR = {Lieberman, G. M.},
     TITLE = {The natural generalization of the natural conditions of
              {L}adyzhenskaya and {U}ralcprime tseva for elliptic
              equations},
   JOURNAL = {Comm. Partial Differential Equations},
  FJOURNAL = {Communications in Partial Differential Equations},
    VOLUME = {16},
      YEAR = {1991},
    NUMBER = {2-3},
     PAGES = {311--361},
      ISSN = {0360-5302,1532-4133},
   MRCLASS = {35J60 (35B65)},
  MRNUMBER = {1104103},
MRREVIEWER = {M.\ Biroli},
       DOI = {10.1080/03605309108820761},
       URL = {https://doi.org/10.1080/03605309108820761},
}

@book {Han97,
    AUTHOR = {Han, Q. and Lin, F.},
     TITLE = {Elliptic partial differential equations},
    SERIES = {Courant Lecture Notes in Mathematics},
    VOLUME = {1},
 PUBLISHER = {New York University, Courant Institute of Mathematical
              Sciences, New York; American Mathematical Society, Providence,
              RI},
      YEAR = {1997},
     PAGES = {x+144},
      ISBN = {0-9658703-0-8; 0-8218-2691-3},
   MRCLASS = {35Jxx (35-01 35B50)},
  MRNUMBER = {1669352},
}

@article {Mingione07,
    AUTHOR = {Acerbi, E. and Mingione, G.},
     TITLE = {Gradient estimates for a class of parabolic systems},
   JOURNAL = {Duke Math. J.},
  FJOURNAL = {Duke Mathematical Journal},
    VOLUME = {136},
      YEAR = {2007},
    NUMBER = {2},
     PAGES = {285--320},
      ISSN = {0012-7094,1547-7398},
   MRCLASS = {35K50 (35B45 35K55 35K65)},
  MRNUMBER = {2286632},
MRREVIEWER = {Siegfried\ Carl},
       DOI = {10.1215/S0012-7094-07-13623-8},
       URL = {https://doi.org/10.1215/S0012-7094-07-13623-8},
}

@book {Harjulehto19,
    AUTHOR = {Harjulehto, P. and H\"ast\"o, P.},
     TITLE = {Orlicz spaces and generalized {O}rlicz spaces},
    SERIES = {Lecture Notes in Mathematics},
    VOLUME = {2236},
 PUBLISHER = {Springer, Cham},
      YEAR = {2019},
     PAGES = {x+167},
      ISBN = {978-3-030-15099-0; 978-3-030-15100-3},
   MRCLASS = {46-02 (42B35 46E35)},
  MRNUMBER = {3931352},
MRREVIEWER = {Karol\ Le\'snik},
       DOI = {10.1007/978-3-030-15100-3},
       URL = {https://doi.org/10.1007/978-3-030-15100-3},
}

@book {Musielak83,
    AUTHOR = {Musielak, J.},
     TITLE = {Orlicz spaces and modular spaces},
    SERIES = {Lecture Notes in Mathematics},
    VOLUME = {1034},
 PUBLISHER = {Springer-Verlag, Berlin},
      YEAR = {1983},
     PAGES = {iii+222},
      ISBN = {3-540-12706-2},
   MRCLASS = {46E30 (46-02 46A15)},
  MRNUMBER = {724434},
MRREVIEWER = {K.\ Sundaresan},
       DOI = {10.1007/BFb0072210},
       URL = {https://doi.org/10.1007/BFb0072210},
}

@article {Filippis19,
    AUTHOR = {De Filippis, C. and Mingione, G.},
     TITLE = {A borderline case of {C}alder\'on-{Z}ygmund estimates for
              nonuniformly elliptic problems},
   JOURNAL = {Algebra i Analiz},
  FJOURNAL = {Rossi\u iskaya Akademiya Nauk. Algebra i Analiz},
    VOLUME = {31},
      YEAR = {2019},
    NUMBER = {3},
     PAGES = {82--115},
      ISSN = {0234-0852},
   MRCLASS = {35J70 (35B27 35B45 35J62 35J92 49J10 49N60)},
  MRNUMBER = {3985927},
       DOI = {10.1090/spmj/1608},
       URL = {https://doi.org/10.1090/spmj/1608},
}

@article {Benkirane14,
    AUTHOR = {Benkirane, A. and Sidi El Vally, M.},
     TITLE = {Variational inequalities in {M}usielak-{O}rlicz-{S}obolev
              spaces},
   JOURNAL = {Bull. Belg. Math. Soc. Simon Stevin},
  FJOURNAL = {Bulletin of the Belgian Mathematical Society. Simon Stevin},
    VOLUME = {21},
      YEAR = {2014},
    NUMBER = {5},
     PAGES = {787--811},
      ISSN = {1370-1444,2034-1970},
   MRCLASS = {35J40 (35J62 46A80 46E35)},
  MRNUMBER = {3298478},
       URL = {http://projecteuclid.org/euclid.bbms/1420071854},
}

@article {Harjulehto16,
    AUTHOR = {Harjulehto, P. and H\"ast\"o, P. and Kl\'en, R.},
     TITLE = {Generalized {O}rlicz spaces and related {PDE}},
   JOURNAL = {Nonlinear Anal.},
  FJOURNAL = {Nonlinear Analysis. Theory, Methods \& Applications. An
              International Multidisciplinary Journal},
    VOLUME = {143},
      YEAR = {2016},
     PAGES = {155--173},
      ISSN = {0362-546X,1873-5215},
   MRCLASS = {35J60 (35J20 46E30 46E35 49J40)},
  MRNUMBER = {3516828},
MRREVIEWER = {Futoshi\ Takahashi},
       DOI = {10.1016/j.na.2016.05.002},
       URL = {https://doi.org/10.1016/j.na.2016.05.002},
}
\end{document}